\documentclass[11pt, twoside]{article}
%\pdfoutput=1

%
\usepackage{graphicx}
%\graphicspath{{figures/}}
\usepackage[caption=false]{subfig}
%% Used for papers with subtables created with the subfig package
\captionsetup[subtable]{position=bottom}
\captionsetup[table]{position=bottom}
\usepackage{amsmath}
\usepackage{amssymb,amsfonts}
\usepackage{amsthm}
\usepackage{bm}
\usepackage{mathrsfs}
\usepackage{amssymb}
\usepackage{multirow}
%\usepackage{stmaryrd}

% Margins
\usepackage[margin=1in]{geometry}

% Algorithm stuff
\usepackage{algorithm}
\usepackage{algorithmic}

\numberwithin{equation}{section}
\usepackage{multirow}
\usepackage{makecell}
\usepackage{booktabs}
% Author packages and commands
\theoremstyle{definition}
\newtheorem{theorem}{Theorem}

\newtheorem{lemma}{Lemma}
\newtheorem{corollary}{Corollary}

\newtheorem{remark}{Remark}
\newtheorem{example}{Example}

\usepackage{cite}
\usepackage{hyperref}
\usepackage[nameinlink]{cleveref}
\newcommand{\vertiii}[1]{{\left\vert\kern-0.25ex\left\vert\kern-0.25ex\left\vert #1 
		\right\vert\kern-0.25ex\right\vert\kern-0.25ex\right\vert}}

% Headers and Footers
\usepackage{fancyhdr}
\pagestyle{fancy}
            % no line under header
\fancyhead[EC]{G.\ Chen, P.\ Monk,  Y.\ Zhang}
\fancyhead[OC]{Sharp $L^\infty$ estimates of HDG methods for Poisson equation}
\fancyhead[L,R]{}
%\lfoot{}
%\rfoot{}
\cfoot{\thepage}

\begin{document}
	
	\title{Sharp $L^\infty$ estimates of HDG methods for Poisson equation II: 3D}%\tnoteref{mytitlenote}}

\author{Gang Chen%
	\thanks{ College of  Mathematics, Sichuan University, Chengdu 610064, China (\mbox{cglwdm@uestc.edu.cn}).}
	\and
	Peter Monk%
	\thanks{Department of Mathematics Science, University of Delaware, Newark, DE, USA (\mbox{monk@udel.edu}).}
	\and
	Yangwen Zhang%
	\thanks{Department of Mathematics Science, Carnegie Mellon University, Pittsburgh, PA, USA (\mbox{yangwenz@andrew.cmu.edu}).}
}

\date{\today}

\maketitle

\begin{abstract}
 In [SIAM J. Numer. Anal., 59 (2), 720-745], we proved quasi-optimal $L^\infty$ estimates (up to logarithmic factors) for the solution of Poisson's equation  by a hybridizable discontinuous Galerkin (HDG) method. However, the  estimates \emph{only} work  in 2D. In this paper, we use the approach in [Numer. Math., 131 (2015), pp. 771–822] and  obtain \emph{sharp} (without logarithmic factors) $L^\infty$ estimates for the HDG method in both 2D and 3D. Numerical experiments are presented to confirm our theoretical result.
\end{abstract}

\section{Introduction}
This is the second in a series of papers devoted to proving  $L^\infty$ norm estimates for the hybridizable discontinuous Galerkin (HDG) method  applied to elliptic partial differential equations (PDEs). 	The HDG methods were proposed by Cockburn et al. \cite{Cockburn_Gopalakrishnan_Lazarov_Unify_SINUM_2009}, have the same advantages as typical DG methods but have many less globally coupled unknowns.  HDG methods are currently undergoing rapid development and have been used in many applications; see, e.g., \cite{monk2021hdg,MR3992054,HuShenSinglerZhangZheng_HDG_Dirichlet_control1,ChenCockburnSinglerZhang1,MR3987425,Chen_Monk_Peter1}.

Let us place our results within the ongoing effort of proving sharp $L^\infty$ estimates for PDEs. The first technique for $L^\infty$ norm
estimation  was developed in series of papers by Schatz and Wahlbin \cite{Schatz_Math_Comp_1977,Schatz_Math_Comp_1982,Schatz_Math_Comp_1995}. They used dyadic decomposition of the domain and require local energy estimates together with sharp pointwise estimates for the corresponding components of the Green’s matrix. For smooth domains this technique was successfully used in \cite{Chen_SINUM_2006} for mixed methods,  in \cite{Guzman_Math_Comp_2008,MR1954085} for discontinuous Galerkin (DG) methods and in \cite{Chen_Math_Comp_2005} for  local DG methods. This technique was also applied  for the  Stokes equations, see Guzm\'{a}n  and Leykekhman \cite{Guzman_Math_Comp_2012}. 

Another technique is based on weighted $L^2$ norms. In 1976, Scott \cite{Scott_Math_Comp_1976} proved a quasi-optimal $ L^\infty $ norm estimates for the conforming finite element method. Later on, Gastaldi and Nochetto \cite{Gastaldi_M2AN_1989} extended this technique for mixed methods; see also \cite{Scholz_M2AN_1977,Gastaldi_NM_1987,Duran_M2AN_1988,Duran_M2AN_1988_nonlinear,Gastaldi_M2AN_1989,wang1989asymptotic}.  Since there is a strong relation between the HDG and the mixed methods (see \cite{Cockburn_Gopalakrishnan_Lazarov_Unify_SINUM_2009}), it is reasonable to ask if similar estimates could be obtained for the HDG method.

In part I of this work \cite{ChenMonkZhangLinfty1SINUM2021}, we considered  $L^\infty$ estimates for the HDG approximation of the solution of the  following elliptic system:
\begin{subequations}\label{Poisson}
	\begin{align}
		\bm q + \nabla u &= \bm 0 \quad  \text{in} \; \Omega \label{p1},\\
		\nabla \cdot \bm q  &=  f \quad  \text{in} \; \Omega \label{p2},\\
		u &= 0\quad  \text{on}\;  \partial\Omega, \label{p3}
	\end{align}
\end{subequations}
where $\Omega\subset \mathbb R^2$ is a convex polygonal Lipschitz domain with  boundary $\partial \Omega$ and $f\in L^2(\Omega)$. We proved quasi-optimal $L^{\infty}$ error estimates for the HDG approximation to $u$ and $\bm q$.  However, the estimates are only valid for a triangular mesh  in 2D and polynomials of  degree $k\ge 1$. The current paper is devoted mainly to $\mathbb R^3$, in which case $\Omega$ is a convex, polyhedral,  Lipschitz domain. 

The roadblock for  optimal $L^\infty$ error estimates for the HDG method in 3D   is that the HDG projection (see \cite[Proposition 2.1]{Cockburn_Gopalakrishnan_Sayas_Porjection_MathComp_2010}) only has a weak commutative property. Hence, we use different finite element spaces and numerical fluxes  such that the corresponding HDG projection satisfies  a strong commutative property. We note that this choice was first proposed by Lehrenfeld in \cite{Lehrenfeld_PhD_thesis_2010}. However, this brings new challenges, we  are not only need to prove an optimal weighted $L^2$ estimates for the flux of the Green's function, but also the Green's function itself. Moreover, we use the approach in \cite{Scott_NM_2015} to remove the logarithmic factors in the error analysis; see \Cref{Linftyestimates}. To the best of our knowledge, this is the first such result for mixed methods and HDG methods. Finally,  we present  numerical experiments in \Cref{NumericalExperiments}  to confirm our theoretical results from \Cref{main_result_Linfty_norm}.

\section{HDG formulation and preliminary material}
Throughout the paper we adopt the standard notation $W^{m,p}(D)$ for Sobolev spaces on a bounded domain $D\subset \mathbb R^d$  ($d=2,3$) with norm $\|\cdot\|_{W^{m,p}(D)}$ and seminorm $|\cdot|_{W^{m,p}(D)}$:
\begin{align*}
	\|u\|_{W^{m,p}(D)}^p = \sum_{|i|\le m}\int_{D} |D^i u|^p {\rm d} \bm x,\quad
	|u|_{W^{m,p}(D)}^p = \sum_{|i|= m}\int_{D} |D^i u|^p {\rm d} \bm x,
\end{align*}
where $i$ is a multi-index and $D^i$ is the corresponding partial differential operator of order $|i|$. We denote $W^{m,2}(D)$ by $H^{m}(D)$ with norm $\|\cdot\|_{H^m(D)}$ and seminorm $|\cdot|_{H^m(D)}$. Specifically, $H_0^1(D)=\{v\in H^1(D):v=0 \;\mbox{on}\; \partial D\}$.  We denote the $L^2$-inner products on $D$ and $S$ by
\begin{align*}
	(v,w)_{D} = \int_{D} vw \;{\rm d}\bm x \quad \forall v,w\in L^2(D),\quad 
	\left\langle v,w\right\rangle_{S} = \int_{S} vw \;{\rm d}\bm x\quad\forall v,w\in L^2(S),
\end{align*}
where $D\subset \mathbb R^d$ and $S$ is a surface in $\mathbb R^{d-1}$.  Finally, we define the space $\bm H(\text{div},\Omega) $ as usual
\begin{align*}
	\bm H(\text{div},\Omega) = \{\bm{v}\in [L^2(\Omega)]^d\,:\, \nabla\cdot \bm{v}\in L^2(\Omega)\}.
\end{align*}

{Let $\mathcal{T}_{h}$ be a collection of disjoint polyhedral elements $K$ that partition $\Omega$. We  assume that all the elements are shape-regular and quasi-uniform in the sense of \cite{dong2021hybrid}}. We denote by $\partial \mathcal{T}_h$ the set $\{\partial K: K\in \mathcal{T}_h\}$. For an element $K$ of the mesh  $\mathcal{T}_h$, let $F = \partial K \cap \partial\Omega$ denotes the boundary face of $ K $ having non-zero $d-1$ dimensional Lebesgue measure. Let $\mathcal F_h^\partial$ be the set of boundary faces and  $\mathcal F_h$ denote the set of all faces. We define the following mesh dependent norms  and spaces by
\begin{gather*}
	(w,v)_{\mathcal{T}_h} = \sum_{K\in\mathcal{T}_h} (w,v)_K,   \quad\quad\quad\quad\left\langle \zeta,\rho\right\rangle_{\partial\mathcal{T}_h} = \sum_{K\in\mathcal{T}_h} \left\langle \zeta,\rho\right\rangle_{\partial K},\\
	H^1(\mathcal T_h) = \prod_{K\in \mathcal T_h} H^1(K), \quad\quad\quad\quad  L^2 (\partial\mathcal T_h) =  \prod_{K\in \mathcal T_h} L^2(\partial K).
\end{gather*}

Let $\mathcal{P}^k(D)$ (resp $\mathcal{P}^k(S)$) denote the set of polynomials of degree at most $k$ on a domain $D\subset \mathbb R^d$ (resp.  a plane $S\subset \mathbb R^d$).  We introduce the discontinuous finite element spaces used in the HDG method as follows:
\begin{alignat*}{4}
	\bm{V}_h  &:= \{\bm{v}\in [L^2(\Omega)]^d:&&\; \bm{v}|_{K}\in [\mathcal{P}^k(K)]^d, &&\;\forall \; K\in \mathcal{T}_h\},\\
	{W}_h  &:= \{{w}\in L^2(\Omega):&&\; {w}|_{K}\in \mathcal{P}^{k+1}(K),&&\; \forall \; K\in \mathcal{T}_h\},\\
	{M}_h  &:= \{{\mu}\in L^2(\mathcal{\mathcal{F}}_h):&&\; {\mu}|_F\in \mathcal{P}^k(F), &&\;\forall \; F\in \mathcal{F}_h, &&\;\mu|_{\mathcal{F}_h^\partial} = 0\}.
\end{alignat*}

\subsection{HDG  formulation}
In this paper, we will use the HDG scheme of \cite{Lehrenfeld_PhD_thesis_2010}. The HDG  method seeks the flux ${\bm{q}}_h \in \bm{V}_h $, the scalar variable $ u_h  \in W_h $ and  its numerical trace $ \widehat{u}_h\in \widehat W_h$ satisfying
\begin{subequations}\label{HDG_discrete2}
	\begin{align}
		(\bm{q}_h, \bm{v}_h)_{{\mathcal{T}_h}}- (u_h, \nabla\cdot \bm{v}_h)_{{\mathcal{T}_h}}+\langle \widehat u_h, \bm{v}_h\cdot \bm{n} \rangle_{\partial{{\mathcal{T}_h}}} &=0, \label{HDG_discrete2_a}\\
		-(\bm{q}_h, \nabla w_h)_{{\mathcal{T}_h}}
		+\langle\widehat{\bm{q}}_h\cdot\bm{n}, w_h \rangle_{\partial{{\mathcal{T}_h}}}&=(f, w_h)_{{\mathcal{T}_h}}, \label{HDG_discrete2_b}\\
		\langle\widehat{\bm{q}}_h\cdot \bm{n}, \widehat w_h \rangle_{\partial\mathcal{T}_{h}}&=0 \label{HDG_discrete2_e}
	\end{align}
	for all $(\bm{v}_h,w_h,\widehat w_h)\in \bm{V}_h\times W_h\times \widehat W_h$.
	The numerical {flux} on $\partial\mathcal{T}_h$ is defined by \cite{Lehrenfeld_PhD_thesis_2010}
	\begin{align}\label{num_tra}
		\widehat{\bm q}_h\cdot\bm n &= \bm q_h\cdot\bm n+h_K^{-1}(\Pi^{\partial}_ku_h-\widehat{u}_h),
	\end{align}
	where $\Pi_k^\partial$ is  the element-wise $L^2$ projection from $\mathcal P^{k+1}(F)$ to $\mathcal P^k(F)$:
	\begin{align}\label{L2_edge}
		\langle \Pi_k^\partial   w, \widehat w_h \rangle_F &= \langle w,  \widehat w_h  \rangle_F\qquad \forall \;   \widehat w_h \in \mathcal P^{k}(F).
	\end{align}
	This completes the definition of the  HDG formulation we shall analyze.
\end{subequations}

To shorten lengthy equations, we define the following HDG bilinear form $ \mathscr B: \left(\bm H^1(\mathcal T_h) \times\right.$ $\left. H^1(\mathcal T_h)\times L^2 (\partial\mathcal T_h) \right)  \times \left( \bm H^1(\mathcal T_h) \times H^1(\mathcal T_h)\times L^2 (\partial\mathcal T_h)\right)\to \mathbb R$ by
\begin{align}\label{def_B}
	\begin{split}
		\mathscr  B( \bm q,u,\widehat u;\bm v,w,\widehat w)&=(\bm{q}, \bm v)_{{\mathcal{T}_h}}- (u, \nabla\cdot \bm v)_{{\mathcal{T}_h}}+\langle \widehat{u}, \bm v\cdot \bm{n} \rangle_{\partial{{\mathcal{T}_h}}}\\
		&\quad- (\nabla\cdot\bm{q},  w)_{{\mathcal{T}_h}}+ \langle \bm q\cdot\bm n, \widehat w \rangle_{\partial \mathcal T_h}-\langle  h_K^{-1}(\Pi_k^{\partial} u - \widehat u),  \Pi_k^{\partial}w-\widehat w \rangle_{\partial{{\mathcal{T}_h}}},
	\end{split}
\end{align}
By the definition of $\mathscr  B$ in \eqref{def_B},  we can rewrite the HDG formulation of system \eqref{HDG_discrete2}, as follows: find $({\bm{q}}_h,u_h,\widehat u_h)\in \bm V_h\times W_h\times \widehat W_h$  such that
\begin{align}\label{HDG_full_discrete}
	\mathscr B (\bm q_h,u_h,\widehat u_h;\bm v_h,w_h,\widehat w_h) =  - (f,w_h)_{\mathcal T_h}
\end{align}
for all $\left(\bm{v}_h,w_h,\widehat w_h\right)\in \bm V_h\times W_h\times \widehat W_h$. Moreover, the exact solution $(\bm q, u)\in \bm H^1(\mathcal T_h) \times H^1(\mathcal T_h)$ satisfies equation \eqref{HDG_full_discrete}, i.e.,
\begin{align}\label{HDG_exact}
	\mathscr B (\bm q,u, u;\bm v_h,w_h,\widehat w_h) =  - (f,w_h)_{\mathcal T_h}.
\end{align}
The following lemma shows that the bilinear form $\mathscr B$ is symmetric and has an important positive property. It is proved by integration by parts. We do not provide details.
\begin{lemma}\label{eq_B}
	For any $ (\bm q,u,\widehat u; \bm v,w,\widehat w) \in \bm H^1(\mathcal T_h) \times H^1(\mathcal T_h)\times L^2 (\partial\mathcal T_h)\times  \bm H^1(\mathcal T_h) \times H^1(\mathcal T_h)\times L^2 (\partial\mathcal T_h)$, we have
	\begin{subequations}
		\begin{align}
			\mathscr B (\bm q,u,\widehat u;\bm v,w,\widehat w)  &=  	\mathscr B (\bm v,w,\widehat w;\bm q,u,\widehat u),\label{commute}\\
			\mathscr B (\bm q,u,\widehat u;\bm q,-u,-\widehat u) &= \|\bm q\|_{\mathcal T_h}^2 + \|h_K^{\frac 1 2}(\Pi_k^\partial u - \widehat u)\|_{\partial \mathcal T_h}^2.
		\end{align}			
	\end{subequations}
\end{lemma}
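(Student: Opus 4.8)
The plan is to establish both identities by directly expanding the definition \eqref{def_B} and exploiting the symmetry of the volume inner products $(\cdot,\cdot)_{\mathcal T_h}$ and the face inner products $\langle\cdot,\cdot\rangle_{\partial\mathcal T_h}$. Since $\mathscr B$ is already written in a balanced form containing \emph{both} $-(u,\nabla\cdot\bm v)_{\mathcal T_h}$ and $-(\nabla\cdot\bm q,w)_{\mathcal T_h}$, no further integration by parts across element interfaces is required beyond what was already used to pass from \eqref{HDG_discrete2} to \eqref{def_B}; the whole argument reduces to careful bookkeeping.

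For the symmetry \eqref{commute}, I would write out $\mathscr B(\bm v,w,\widehat w;\bm q,u,\widehat u)$ from the definition and pair each of its six terms with a term of $\mathscr B(\bm q,u,\widehat u;\bm v,w,\widehat w)$. The leading term $(\bm q,\bm v)_{\mathcal T_h}$ and the penalty term $\langle h_K^{-1}(\Pi_k^\partial u-\widehat u),\Pi_k^\partial w-\widehat w\rangle_{\partial\mathcal T_h}$ are manifestly invariant under the swap. The remaining four terms split into two mixed pairs, namely $-(u,\nabla\cdot\bm v)_{\mathcal T_h}+\langle\widehat u,\bm v\cdot\bm n\rangle_{\partial\mathcal T_h}$ and $-(\nabla\cdot\bm q,w)_{\mathcal T_h}+\langle\bm q\cdot\bm n,\widehat w\rangle_{\partial\mathcal T_h}$, which simply exchange their roles under $(\bm q,u,\widehat u)\leftrightarrow(\bm v,w,\widehat w)$; matching them using the symmetry of each scalar inner product yields the claim.

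For the positivity identity, I would substitute $(\bm v,w,\widehat w)=(\bm q,-u,-\widehat u)$ into \eqref{def_B}. The leading term immediately gives $\|\bm q\|_{\mathcal T_h}^2$. The key observation is that the four mixed terms cancel in antisymmetric pairs: $-(u,\nabla\cdot\bm q)_{\mathcal T_h}$ cancels $-(\nabla\cdot\bm q,-u)_{\mathcal T_h}=(\nabla\cdot\bm q,u)_{\mathcal T_h}$, and $\langle\widehat u,\bm q\cdot\bm n\rangle_{\partial\mathcal T_h}$ cancels $\langle\bm q\cdot\bm n,-\widehat u\rangle_{\partial\mathcal T_h}$, both by symmetry of the inner products. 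Finally, using the linearity of $\Pi_k^\partial$ so that $\Pi_k^\partial(-u)-(-\widehat u)=-(\Pi_k^\partial u-\widehat u)$, the penalty term becomes $-\langle h_K^{-1}(\Pi_k^\partial u-\widehat u),-(\Pi_k^\partial u-\widehat u)\rangle_{\partial\mathcal T_h}=\langle h_K^{-1}(\Pi_k^\partial u-\widehat u),\Pi_k^\partial u-\widehat u\rangle_{\partial\mathcal T_h}$, which is precisely the weighted boundary term on the right-hand side.

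The calculation carries no genuine analytic obstacle — it is pure algebra — which is why the authors state it without proof. The only point requiring attention is the sign of the penalty contribution: the leading minus sign in \eqref{def_B} combines with the sign flip produced by the test function $(-u,-\widehat u)$ to produce a \emph{positive} boundary term, and one must invoke that $\Pi_k^\partial$ commutes with scalar multiplication to see $\Pi_k^\partial w-\widehat w=-(\Pi_k^\partial u-\widehat u)$. If one were instead to start from the original flux-form equations \eqref{HDG_discrete2_a}--\eqref{HDG_discrete2_e}, the cancellations in the mixed pairs would have to be produced by integrating $-(u,\nabla\cdot\bm v)_{\mathcal T_h}$ and $-(\bm q,\nabla w)_{\mathcal T_h}$ by parts first, but working directly from \eqref{def_B} bypasses this entirely.
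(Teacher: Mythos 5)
Your argument is correct and complete, and it is in substance cleaner than what the paper advertises. The paper dismisses the lemma in one line (``It is proved by integration by parts. We do not provide details.''), whereas you correctly observe that once $\mathscr B$ is written in the balanced form \eqref{def_B} --- containing both $-(u,\nabla\cdot \bm v)_{\mathcal T_h}$ and $-(\nabla\cdot\bm q, w)_{\mathcal T_h}$ --- both identities are pure bookkeeping: \eqref{commute} follows from term-by-term matching under the swap (the two mixed pairs exchange roles, the leading and penalty terms are invariant), and the positivity identity from the antisymmetric cancellation of the mixed pairs together with the linearity of $\Pi_k^\partial$. Integration by parts enters only in relating \eqref{def_B} to the flux formulation \eqref{HDG_discrete2}, i.e., in deriving \eqref{HDG_full_discrete} and \eqref{HDG_exact}, exactly as your closing paragraph notes; for the lemma itself it buys nothing. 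So your route and the paper's intended one land in the same place, with yours avoiding an unnecessary appeal to element-wise integration by parts.

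One point you should have flagged rather than smoothed over: your computation of the penalty contribution yields $\langle h_K^{-1}(\Pi_k^\partial u-\widehat u),\Pi_k^\partial u-\widehat u\rangle_{\partial\mathcal T_h}=\|h_K^{-\frac{1}{2}}(\Pi_k^\partial u-\widehat u)\|_{\partial\mathcal T_h}^2$, which is \emph{not} ``precisely'' the right-hand side as printed --- the lemma states $\|h_K^{\frac{1}{2}}(\Pi_k^\partial u-\widehat u)\|_{\partial\mathcal T_h}^2$. The exponent $+\frac{1}{2}$ in the statement is evidently a typo, since the weight $h_K^{-\frac{1}{2}}$ is what appears wherever this identity is used downstream, e.g.\ in \eqref{L2_error_Phi}, \eqref{L2_error_Psi} and \eqref{psi-phi}. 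Your derivation produces the correct version, but asserting the match ``precisely'' without noticing the discrepancy is the one lapse of care in an otherwise airtight verification.
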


\subsection{Preliminary material}\label{sec:Projectionoperator}
We introduce the standard local $L^2$ projection operator $\Pi_\ell^o: L^2(K)\to \mathcal P^{\ell}(K)$  satisfying
\begin{align}\label{L2_do}
	(\Pi_\ell^o w, w_h)_K &= (w,w_h)_K\qquad \forall \; w_h\in \mathcal P^\ell(K).
\end{align}

We use $\bm \Pi_\ell^o$ to denote the local vector $L^2$ projection operator, the definition componentwise is the same as the  local scalar  $L^2$ projection operator.  The next lemma gives the approximation properties of $\Pi_\ell^o$ and its proof can be found in \cite[Theorem 3.3.3, Theorem 3.3.4]{book2}.
\begin{lemma}\label{approximation_of_L2}
	Let $\ell\ge 0$ be an integer and  $\rho\in [1, +\infty]$. If $(\ell+1)\rho<d$, then we require $d, \rho$ and $\ell$ to also satisfy $2\le \frac{d\rho}{d -(\ell+1)\rho }$. For $j\in \{0, 1 ,\ldots, \ell+1\}$, if $s_j$ satisfies
	\begin{align}\label{index_s}
		\begin{cases}
			\rho \le s_j \le  \frac{d\rho}{d -(\ell+1-j)\rho } &(\ell+1-j)\rho <d,\\
			\rho \le s_j  < \infty&(\ell+1-j)\rho =d,\\
			\rho \le s_j \le \infty &(\ell+1-j)\rho >d,
		\end{cases}
	\end{align}
	then there exists a constant $C$ which is independent of $K$ such that 
	\begin{subequations}
		\begin{align}
			\| \nabla^j(\Pi_\ell^o u - u) \|_{L^{s_j}(K)} &\le C h_K^{\ell+1-j+\frac{d}{s_j} - \frac{d}{\rho}}|u|_{W^{\ell+1, \rho}(K)},\label{Pro_jec_1}\\
			\| \nabla^j(\Pi_\ell^o u - u) \|_{L^{s_j}(\partial K)}&\le C h_K^{\ell+1-j+\frac{d-1}{s_j} - \frac{d}{\rho}}|u|_{W^{\ell+1, \rho}(K)}.\label{Pro_jec_2}
		\end{align}	
	\end{subequations}
\end{lemma}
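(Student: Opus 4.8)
The plan is to use the classical scaling (homogeneity) argument together with the Bramble--Hilbert lemma: prove the interior estimate \eqref{Pro_jec_1} on a fixed reference element and scale it back, and then deduce the boundary estimate \eqref{Pro_jec_2} from a scaled trace inequality. First I would transfer everything to the reference element $\hat K$. Let $F_K(\hat{\bm x})=B_K\hat{\bm x}+\bm b_K$ be the affine map from $\hat K$ onto $K$ and write $\hat u=u\circ F_K$. Shape-regularity and quasi-uniformity give $\|B_K\|\simeq h_K$, $\|B_K^{-1}\|\simeq h_K^{-1}$ and $|\det B_K|\simeq h_K^d$, from which the standard homogeneity relations follow:
\begin{align*}
\|\nabla^j v\|_{L^{s}(K)}&\simeq h_K^{-j+\frac{d}{s}}\,\|\hat\nabla^j\hat v\|_{L^{s}(\hat K)},\\
\|\nabla^j v\|_{L^{s}(\partial K)}&\simeq h_K^{-j+\frac{d-1}{s}}\,\|\hat\nabla^j\hat v\|_{L^{s}(\partial\hat K)},\\
|\hat u|_{W^{\ell+1,\rho}(\hat K)}&\simeq h_K^{\ell+1-\frac{d}{\rho}}\,|u|_{W^{\ell+1,\rho}(K)}.
\end{align*}
Because $\mathcal P^\ell$ is invariant under the affine change of variables and $\det B_K$ is constant, the projection commutes with the pullback, i.e. $\widehat{\Pi_\ell^o u}=\hat\Pi_\ell^o\hat u$, where $\hat\Pi_\ell^o$ is the $L^2(\hat K)$ projection onto $\mathcal P^\ell(\hat K)$; this is the algebraic fact that lets the whole argument descend to $\hat K$.

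Next, on the reference element I would establish $\|\hat\nabla^j(\hat\Pi_\ell^o\hat u-\hat u)\|_{L^{s_j}(\hat K)}\le C|\hat u|_{W^{\ell+1,\rho}(\hat K)}$ with $C=C(\hat K,d,\ell,\rho,s_j)$. The operator $T\hat u:=\hat\nabla^j(\hat\Pi_\ell^o\hat u-\hat u)$ annihilates $\mathcal P^\ell(\hat K)$, since $\hat\Pi_\ell^o$ reproduces polynomials of degree $\le\ell$. It then remains to check that $T$ is bounded from $W^{\ell+1,\rho}(\hat K)$ into $L^{s_j}(\hat K)$: the term $\hat\nabla^j\hat u$ is controlled by the Sobolev embedding $W^{\ell+1-j,\rho}(\hat K)\hookrightarrow L^{s_j}(\hat K)$, which is precisely what the three regimes in \eqref{index_s} encode, while the polynomial term $\hat\nabla^j\hat\Pi_\ell^o\hat u$ is harmless because $\mathcal P^\ell(\hat K)$ is finite dimensional (all norms equivalent) and $\hat\Pi_\ell^o$ is bounded on $L^\rho(\hat K)$. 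With $T$ bounded and $T|_{\mathcal P^\ell}=0$, the Bramble--Hilbert/Deny--Lions lemma yields $\|T\hat u\|_{L^{s_j}(\hat K)}\le C\inf_{\hat p\in\mathcal P^\ell}\|\hat u-\hat p\|_{W^{\ell+1,\rho}(\hat K)}\le C|\hat u|_{W^{\ell+1,\rho}(\hat K)}$. Inserting this into the homogeneity relations and collecting the powers of $h_K$ produces the exponent $\ell+1-j+\frac{d}{s_j}-\frac{d}{\rho}$ of \eqref{Pro_jec_1}.

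For the boundary bound \eqref{Pro_jec_2} I would apply the scaled $L^{s_j}$ trace inequality
\[
\|w\|_{L^{s_j}(\partial K)}\le C\bigl(h_K^{-1/s_j}\|w\|_{L^{s_j}(K)}+h_K^{1-1/s_j}\|\nabla w\|_{L^{s_j}(K)}\bigr)
\]
to $w=\nabla^j(\Pi_\ell^o u-u)$ and then bound the two right-hand terms, at differentiation levels $j$ and $j+1$, by the interior estimate just proved; the two resulting powers of $h_K$ coincide and equal $\ell+1-j+\frac{d-1}{s_j}-\frac{d}{\rho}$, which is exactly \eqref{Pro_jec_2}. Equivalently, one may argue directly on $\hat K$ using the trace embedding $W^{\ell+1-j,\rho}(\hat K)\hookrightarrow L^{s_j}(\partial\hat K)$ together with Bramble--Hilbert, and then scale using the boundary homogeneity relation.

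The step I expect to be the main obstacle is the boundedness of $T$ on the reference element across the regimes of \eqref{index_s}, i.e. making the Sobolev (and, for \eqref{Pro_jec_2}, the trace) embeddings quantitative and uniform in the three cases $(\ell+1-j)\rho\lessgtr d$, including the borderline case $(\ell+1-j)\rho=d$ where one must insist on $s_j<\infty$; here the admissible upper range of $s_j$ for the boundary estimate is dictated by the trace embedding and is the delicate point. One must also verify that the standing hypothesis $2\le\frac{d\rho}{d-(\ell+1)\rho}$ guarantees $W^{\ell+1,\rho}(\hat K)\hookrightarrow L^2(\hat K)$, so that $\Pi_\ell^o u$, defined through the $L^2$ inner product, makes sense in the first place, and that $\rho\le s_j$ keeps the embedding in the integrability-raising direction. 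Everything else reduces to bookkeeping of the scaling exponents.
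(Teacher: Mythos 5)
Your proposal coincides, in approach, with the proof the paper relies on: the paper does not prove \Cref{approximation_of_L2} itself but cites a standard textbook, and that proof is exactly your scaling plus Bramble--Hilbert plus Sobolev-embedding argument, including your correct identification of the hypothesis $2\le \frac{d\rho}{d-(\ell+1)\rho}$ as serving only to give $W^{\ell+1,\rho}(K)\hookrightarrow L^2(K)$ so that $\Pi_\ell^o$ is well defined. One structural caveat: the paper's meshes consist of general shape-regular polyhedral elements, so there is no affine map $F_K$ onto a single reference element. You should replace $F_K$ by the pure dilation $\bm x\mapsto (\bm x-\bm x_K)/h_K$, under which $\Pi_\ell^o$ still commutes with the pullback (constant Jacobian, $\mathcal P^\ell$ invariant) and the homogeneity bookkeeping is unchanged, and then invoke Deny--Lions/Bramble--Hilbert with a constant uniform over the resulting family of unit-size shape-regular domains (e.g.\ via Dupont--Scott averaged Taylor polynomials on domains star-shaped with respect to a ball of uniformly bounded chunkiness); this uniformity is precisely what makes $C$ independent of $K$.

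The one substantive slip is the word ``equivalently'' in your treatment of \eqref{Pro_jec_2}: your two boundary routes are not equivalent. The scaled trace inequality produces the term $h_K^{1-1/s_j}\|\nabla^{j+1}(\Pi_\ell^o u-u)\|_{L^{s_j}(K)}$, and controlling it by \eqref{Pro_jec_1} requires $s_j$ to be admissible at differentiation level $j+1$, i.e.\ $s_j\le \frac{d\rho}{d-(\ell-j)\rho}$ in the subcritical case, which is strictly smaller than the level-$j$ range in \eqref{index_s}. Concretely, for $d=3$, $\ell=j=0$, $\rho=2$, $s_0=6$, this route would demand bounding $\|\nabla u\|_{L^6(K)}$ by $|u|_{H^1(K)}$, which is false; so on the upper part of the range only your second route (Bramble--Hilbert on the scaled element combined with the trace embedding $W^{\ell+1-j,\rho}\hookrightarrow L^{s_j}(\partial \hat K)$) survives. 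As you correctly flag, that embedding's natural subcritical limit is $s_j\le \frac{(d-1)\rho}{d-(\ell+1-j)\rho}$, consistent with the $\frac{d-1}{s_j}$ scaling in \eqref{Pro_jec_2} but smaller than the range \eqref{index_s} stated uniformly for both bounds; in every application the paper actually makes of \eqref{Pro_jec_2} (e.g.\ $\ell=j=0$ with $\rho=s_j=\infty$, a supercritical case where the trace embedding is unrestricted), the exponents sit safely inside the trace-admissible range, so this restriction is harmless for the paper's purposes, but your proof should state it rather than claim the full range \eqref{index_s} for the boundary estimate via either route.
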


In the analysis, we also need the following {standard} inverse inequality~{\cite[Theorem 3.4.1]{book2}}, that follows from our assumption of a shape regular mesh.
\begin{lemma}[Inverse inequality]\label{Inverse_inequality}
	{Let $k\ge 0$ be an integer, $\mu, \rho\in [1, +\infty]$, then there exists $C$ depend on $k, \mu, \rho$ and $d$ such that }
	\begin{align}\label{inverse}
		|v_h|_{t, \mu, K} \le Ch_K^{\frac{d}{\mu} - \frac{d}{\rho}-t+s}	|v_h|_{s,\rho, K}, \quad \forall v_h\in \mathcal P_k(K), \quad t\ge s.
	\end{align}
\end{lemma}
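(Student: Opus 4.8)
The plan is to reduce the estimate to an element of unit size by an isotropic dilation and then invoke equivalence of norms on the finite-dimensional polynomial space $\mathcal P^k$. Fix a point $\bm x_K\in K$ (say a vertex), set $\tilde{\bm x}=h_K^{-1}(\bm x-\bm x_K)$, $\tilde K=h_K^{-1}(K-\bm x_K)$, and $\tilde v(\tilde{\bm x})=v_h(\bm x)$. Then $\tilde K$ has unit diameter and, by the shape-regularity assumption on the mesh, belongs to a family of unit-diameter sets that each contain and are contained in balls of radii bounded away from $0$ and $\infty$ uniformly in $K$. The two ingredients are (i) exact tracking of the powers of $h_K$ under this dilation, and (ii) a scale-invariant seminorm inequality on $\tilde K$.

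Because the dilation is isotropic we have the exact pointwise identity $D^\alpha_{\bm x}v_h=h_K^{-|\alpha|}\tilde D^\alpha\tilde v$ for every multi-index $\alpha$, together with $\mathrm d\bm x=h_K^{d}\,\mathrm d\tilde{\bm x}$. Substituting these into the definitions of the seminorms gives the exact scaling relations
\begin{align*}
|v_h|_{t,\mu,K}=h_K^{\frac{d}{\mu}-t}\,|\tilde v|_{t,\mu,\tilde K},\qquad |v_h|_{s,\rho,K}=h_K^{\frac{d}{\rho}-s}\,|\tilde v|_{s,\rho,\tilde K}.
\end{align*}
The desired exponent $\tfrac{d}{\mu}-\tfrac{d}{\rho}-t+s$ will then drop out once the two $h_K$-free seminorms on $\tilde K$ are compared.

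The heart of the argument is the scale-invariant bound $|\tilde v|_{t,\mu,\tilde K}\le \widehat C\,|\tilde v|_{s,\rho,\tilde K}$ on the \emph{finite-dimensional} space $\mathcal P^k(\tilde K)$, with $\widehat C$ depending only on $k,\mu,\rho,t,s,d$. I would obtain this by a kernel/quotient argument rather than a naive appeal to equivalence of norms, since neither functional is a norm. The seminorm $|\cdot|_{s,\rho,\tilde K}$ vanishes exactly on $\mathcal P^{s-1}(\tilde K)$, while $|\cdot|_{t,\mu,\tilde K}$ vanishes on $\mathcal P^{t-1}(\tilde K)\supseteq \mathcal P^{s-1}(\tilde K)$ precisely because $t\ge s$. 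Hence both descend to the finite-dimensional quotient $\mathcal P^k(\tilde K)/\mathcal P^{s-1}(\tilde K)$, on which $|\cdot|_{s,\rho,\tilde K}$ is a genuine norm; every seminorm on a finite-dimensional normed space is dominated by a constant multiple of that norm, which yields the claim. Combining this with the scaling relations gives
\begin{align*}
|v_h|_{t,\mu,K}=h_K^{\frac{d}{\mu}-t}|\tilde v|_{t,\mu,\tilde K}\le \widehat C\,h_K^{\frac{d}{\mu}-t}|\tilde v|_{s,\rho,\tilde K}=\widehat C\,h_K^{\frac{d}{\mu}-\frac{d}{\rho}-t+s}\,|v_h|_{s,\rho,K},
\end{align*}
which is exactly \eqref{inverse}.

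The step I expect to require the most care is the reference-element inequality. Two points deserve attention: first, the $t\ge s$ hypothesis is used in exactly one place, namely to guarantee the kernel inclusion $\mathcal P^{s-1}\subseteq\mathcal P^{t-1}$ so that both functionals factor through a common quotient on which $|\cdot|_{s,\rho}$ is nondegenerate. Second, for a general polyhedral mesh the dilated element $\tilde K$ is not a single fixed reference element, so the constant $\widehat C$ must be shown \emph{uniform} over the shape-regular family of unit-diameter sets; this is a compactness argument (for the familiar case of affine-equivalent simplices it collapses to the single reference element and is immediate). The remaining bookkeeping — verifying the $h_K$ exponents and confirming the constant depends on no geometric data of $K$ beyond the shape-regularity parameter — is routine.
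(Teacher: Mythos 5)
Your proposal is correct and coincides with the paper's approach: the paper gives no proof of this lemma, deferring to \cite[Theorem 3.4.1]{book2}, and the standard textbook proof is precisely your argument of isotropic scaling to a unit-diameter element followed by domination of the seminorm $|\cdot|_{t,\mu}$ by the norm $|\cdot|_{s,\rho}$ on the finite-dimensional quotient $\mathcal P^k/\mathcal P^{s-1}$ (where $t\ge s$ gives the kernel inclusion). You also rightly flag the only subtle point for the general polyhedral elements used here, namely that the reference constant must be uniform over the shape-regular family rather than tied to a single reference element, which is exactly what the paper's shape-regularity assumption (in the sense of its cited reference) is invoked to guarantee.
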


\subsection{Basic tools related to the  weighted function}
First,  we define the weight $\sigma_z$ by:
\begin{align} \label{def_sigma} 
	\sigma_z(\bm x) = (|\bm x - \bm z|^2 +    {\theta^2})^{\frac 1 2}.
\end{align}
where $\bm z\in \Omega$,  $\theta=\kappa h$ and $\kappa\ge 1$ { is a constant which will be discussed later}.

Next, we summarize some properties of the function which will be used later.

\begin{lemma}\cite{Scott_NM_2015}
	For any $\alpha\in \mathbb R$ there is a constant $C$ independent of $\alpha$ such that the function $\sigma_z$ has the following  properties:
	\begin{subequations}
		\begin{gather}
			\frac{\max_{\bm x\in K} \sigma_z(\bm x)^\alpha}{\min_{\bm x\in K} \sigma_z(\bm x)^\alpha}  \le 3^{\frac \alpha 2} \qquad \forall K\in \mathcal T_h, \label{pro_sigma_1}\\
			\left|\nabla^k (\sigma_z(\bm x)^\alpha)\right|\le C \sigma_z(\bm x)^{\alpha-k},\label{pro_sigma_2}\\
			\int_{\Omega} \sigma_z (\bm x)^{-d-\lambda} ~{\rm d}\bm x \le C\theta^{-\lambda} \qquad\forall 0<\lambda<1,\label{sigma_lemma_2_constant}\\
			\|\sigma_z^{\alpha}\|_{\partial\mathcal T_h}\le C
			\theta^{-\frac 1 2}\|\sigma_z^{\alpha}\|_{\mathcal T_h}.
			\label{sigma-bd}
		\end{gather}
	\end{subequations}
\end{lemma}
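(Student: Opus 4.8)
The four statements are essentially independent of one another, and I would prove them in the order given, since the last one invokes the second. The one structural fact I would exploit throughout is that $\kappa\ge 1$ forces $\theta=\kappa h\ge h$, so the regularization parameter dominates the mesh size; this is precisely what makes $\sigma_z$, defined in \eqref{def_sigma}, \emph{slowly varying} across a single element.

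For \eqref{pro_sigma_1} I would first establish the pointwise comparison $\sigma_z(\bm x)^2\le 3\,\sigma_z(\bm y)^2$ for any two points $\bm x,\bm y$ lying in a common $K\in\mathcal T_h$. Writing $r=|\bm y-\bm z|$ and using $|\bm x-\bm z|\le r+h_K\le r+\theta$ (here $h_K\le h\le\theta$), one gets $\sigma_z(\bm x)^2\le (r+\theta)^2+\theta^2=r^2+2\theta r+2\theta^2$, and since $2r^2+\theta^2-2\theta r=r^2+(r-\theta)^2\ge 0$ gives $2\theta r\le 2r^2+\theta^2$, this collapses to $\sigma_z(\bm x)^2\le 3r^2+3\theta^2=3\,\sigma_z(\bm y)^2$. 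Raising $\sigma_z(\bm x)\le\sqrt3\,\sigma_z(\bm y)$ to the power $\alpha$, and distinguishing the sign of $\alpha$ to identify which of the extrema is the numerator, yields the stated ratio $3^{|\alpha|/2}$.

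For \eqref{pro_sigma_2} I would set $g=|\bm x-\bm z|^2+\theta^2$, so $\sigma_z^\alpha=g^{\alpha/2}$, and use that $\nabla g=2(\bm x-\bm z)$ is linear while $\nabla^2 g=2I$ is constant. An induction on $k$ then shows that $\nabla^k(g^{\alpha/2})$ is a finite linear combination of terms $c\,g^{\alpha/2-m}\cdot(\text{monomial in }(\bm x-\bm z)\text{ of degree }2m-k)$, where $2m-k\ge 0$. Each such term has modulus at most $C\,g^{\alpha/2-m}\,|\bm x-\bm z|^{2m-k}\le C\,\sigma_z^{\alpha-2m}\,\sigma_z^{2m-k}=C\,\sigma_z^{\alpha-k}$, using $g=\sigma_z^2$ and $|\bm x-\bm z|\le\sigma_z$; summing gives \eqref{pro_sigma_2}. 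For \eqref{sigma_lemma_2_constant} I would extend the integral to $\mathbb R^d$ and pass to spherical coordinates about $\bm z$, with $r=|\bm x-\bm z|$ and $\omega_{d-1}$ the surface area of the unit sphere:
\[
\int_{\mathbb R^d}(r^2+\theta^2)^{-(d+\lambda)/2}\,{\rm d}\bm x=\omega_{d-1}\int_0^\infty\frac{r^{d-1}}{(r^2+\theta^2)^{(d+\lambda)/2}}\,{\rm d}r.
\]
The substitution $r=\theta s$ factors out exactly $\theta^{-\lambda}$ and leaves $\int_0^\infty s^{d-1}(1+s^2)^{-(d+\lambda)/2}\,{\rm d}s$, which converges since the integrand is $O(s^{d-1})$ near $0$ and $O(s^{-1-\lambda})$ at infinity; the condition $\lambda>0$ secures convergence and $\lambda<1$ keeps the resulting constant controlled.

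Finally, for \eqref{sigma-bd} I would apply the scaled trace inequality $\|v\|_{\partial K}^2\le C\bigl(h_K^{-1}\|v\|_K^2+h_K\|\nabla v\|_K^2\bigr)$ element by element to $v=\sigma_z^\alpha$. By \eqref{pro_sigma_2}, $|\nabla\sigma_z^\alpha|\le C\sigma_z^{\alpha-1}$, and since $\sigma_z\ge\theta$ we have $\sigma_z^{\alpha-1}=\sigma_z^\alpha\sigma_z^{-1}\le\theta^{-1}\sigma_z^\alpha$ pointwise, so $\|\nabla\sigma_z^\alpha\|_K\le C\theta^{-1}\|\sigma_z^\alpha\|_K$. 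Quasi-uniformity gives $h_K\sim h$, and with $\theta=\kappa h$ both competing terms collapse, $h_K^{-1}\sim\kappa\theta^{-1}$ and $h_K\theta^{-2}\sim\kappa^{-1}\theta^{-1}$, so $h_K^{-1}+h_K\theta^{-2}\le C\theta^{-1}$; hence $\|\sigma_z^\alpha\|_{\partial K}^2\le C\theta^{-1}\|\sigma_z^\alpha\|_K^2$, and summing over $K\in\mathcal T_h$ and taking square roots gives \eqref{sigma-bd}. \textbf{Main obstacle.} None of these is deep once $\theta\ge h$ is in hand; if I had to single out the delicate points, they are the bookkeeping in the induction for \eqref{pro_sigma_2} (verifying the net power is always $\sigma_z^{\alpha-k}$) and the assembly in \eqref{sigma-bd}, where the trace inequality, the gradient bound, and the mesh regularity must be reconciled so that \emph{both} scales $h_K^{-1}$ and $h_K\theta^{-2}$ reduce consistently to $\theta^{-1}$ via $h\sim\theta$.
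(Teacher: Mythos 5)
Your proposal is correct, and it is in fact more self-contained than the paper. For \eqref{pro_sigma_1}, \eqref{pro_sigma_2} and \eqref{sigma_lemma_2_constant} the paper offers no argument at all, simply citing \cite{Scott_JMPA_2005} and \cite{Scott_NM_2015}, whereas you reconstruct the standard proofs: the pointwise comparison $\sigma_z(\bm x)^2\le 3\,\sigma_z(\bm y)^2$ exploiting $h_K\le h\le\theta$, the induction on the derivative order, and the substitution $r=\theta s$, and all three are sound. (Two harmless remarks: the ratio bound must be read as $3^{|\alpha|/2}$ when $\alpha<0$, which your sign discussion handles correctly --- the paper's $3^{\alpha/2}$ is a typo; and the constant produced by your induction for \eqref{pro_sigma_2} necessarily depends on $\alpha$ and the derivative order, so the lemma's claim that $C$ is independent of $\alpha$ cannot come out of any such argument --- that is a misstatement in the paper, not a gap in yours. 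Also, the restriction $\lambda<1$ plays no role in the convergence of your radial integral; the constant degenerates as $\lambda\to 0^+$, not at $\lambda=1$.) The only item the paper proves is \eqref{sigma-bd}, and there your route differs from the paper's only in the form of the trace inequality: you use the additive scaled version $\|v\|_{\partial K}^2\le C\bigl(h_K^{-1}\|v\|_K^2+h_K\|\nabla v\|_K^2\bigr)$, while the paper invokes a multiplicative bound $\|v\|_{\partial K}^2\le C\|v\|_{K}\|\nabla v\|_{K}$ which, as written, is incomplete (take $v$ constant: the right-hand side vanishes; the correct multiplicative inequality carries the extra term $h_K^{-1}\|v\|_K^2$, which your version retains and absorbs via $h_K^{-1}\le C\kappa\theta^{-1}$). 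Both arguments hinge on the same mechanism, $\|\nabla(\sigma_z^{\alpha})\|_{\bm L^2(K)}\le C\|\sigma_z^{\alpha-1}\|_{L^2(K)}\le C\theta^{-1}\|\sigma_z^{\alpha}\|_{L^2(K)}$ followed by summation over elements, and your bookkeeping has the virtue of making explicit that the final constant depends on the (eventually fixed) parameter $\kappa$ --- a dependence that is latent, but equally harmless, in any correct completion of the paper's own argument.
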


\begin{proof}
	The proof of \eqref{pro_sigma_1}, \eqref{pro_sigma_2} and  \eqref{sigma_lemma_2_constant} can be found in \cite[Lemma 2.1, equation (2.2)]{Scott_JMPA_2005} and \cite[equation (1.42)]{Scott_NM_2015}, respectively.
	For \eqref{sigma-bd} we have 
	\begin{align*}
		\|\sigma_z^{\alpha}\|_{L^2(\partial K)}^2\le C 	\|\sigma_z^{\alpha}\|_{L^2(K)}	\|\nabla(\sigma_z^{\alpha})\|_{\bm L^2(K)}\le C	\|\sigma_z^{\alpha}\|_{L^2(K)}	\|\sigma_z^{\alpha-1}\|_{ L^2(K)} \le C\theta^{-\frac 1 2}	\|\sigma_z^{\alpha}\|_{L^2(K)}^2.
	\end{align*}	
	Now \eqref{sigma-bd}  follows by adding over all the elements.
\end{proof}

The next lemma provides weighted norm error estimates for the local $L^2$ projection.

\begin{lemma}\label{L2_noundness_sigma} 
	For any integer $j\ge 0$, we have the weighted approximation	
	\begin{subequations}
		\begin{align}\label{L2_prijection_sigma}
			\left\|\sigma_z^{\alpha}(  v-  \Pi_j^{o}   v)\right\|_{\mathcal T_h}  + 	h_K^{\frac 1 2}\left\|\sigma_z^{\alpha}(  v-  \Pi_j^{o}   v)\right\|_{ \partial\mathcal T_h}  \le Ch^{j+1} \|\sigma_z^{\alpha} \nabla^{j+1}   v\|_{\mathcal T_h},
		\end{align}
		if  $v_h|_K\in \mathcal P^j(K)$, then we have  the  following weighted superconvergence
		\begin{align}\label{weight-app}
			\begin{split}
				\hspace{2em}&\hspace{-2em}	\left\|\sigma_z^{-{\frac \alpha 2}}\left(\sigma_z^{{\alpha}} v_h-\Pi_j^{o}(\sigma_z^{{\alpha}} v_h)\right)\right\|_{\mathcal T_h}   +	h^{\frac 1 2}\left\|\sigma_z^{-{\frac \alpha 2}}\left(\sigma_z^{{\alpha}} v_h-\Pi_j^{o}(\sigma_z^{{\alpha}} v_h)\right)\right\|_{\partial\mathcal T_h} \\
				& + h^{\frac 1 2}	\left\|\sigma_z^{-{\frac \alpha 2}}\left(\sigma_z^{{\alpha}} v_h-\Pi_j^{\partial}(\sigma_z^{{\alpha}} v_h)\right)\right\|_{\partial\mathcal T_h} \le Ch\left\|\sigma_z^{{\frac \alpha 2}-1} v_h\right\|_{\mathcal T_h}. 
			\end{split}
		\end{align}
	\end{subequations}
\end{lemma}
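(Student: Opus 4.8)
The plan is to reduce both estimates to the standard unweighted local approximation properties of $\Pi_j^o$ from \Cref{approximation_of_L2}, exploiting that \eqref{pro_sigma_1} makes every power of the weight essentially constant on each element. Throughout I write $g=\sigma_z^{\alpha}$, fix a representative point $\bm x_K\in K$, and set $\sigma_K=\sigma_z(\bm x_K)$; by \eqref{pro_sigma_1} any power $\sigma_z^{\beta}$ satisfies $\sigma_z^{\beta}\approx\sigma_K^{\beta}$ on $K$ up to the factor $3^{|\beta|/2}$, so I may freely pull weights out of and back into the local norms at the cost of an $\alpha$-dependent constant. Since the mesh is quasi-uniform, $h_K\approx h$, and all element-wise bounds are summed at the end.

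For \eqref{L2_prijection_sigma} I work element by element. On $K$ I bound $\|\sigma_z^{\alpha}(v-\Pi_j^o v)\|_{L^2(K)}\le(\max_K\sigma_z^{\alpha})\,\|v-\Pi_j^o v\|_{L^2(K)}$, apply \eqref{Pro_jec_1} with $\ell=j$, $s_j=\rho=2$ and derivative order $0$ to get $\|v-\Pi_j^o v\|_{L^2(K)}\le Ch_K^{j+1}\|\nabla^{j+1}v\|_{L^2(K)}$, and then reinsert the weight via $\max_K\sigma_z^{\alpha}\,\|\nabla^{j+1}v\|_{L^2(K)}\le 3^{\alpha/2}(\min_K\sigma_z^{\alpha})\|\nabla^{j+1}v\|_{L^2(K)}\le C\|\sigma_z^{\alpha}\nabla^{j+1}v\|_{L^2(K)}$. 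The boundary contribution is identical, using the trace estimate \eqref{Pro_jec_2} in place of \eqref{Pro_jec_1}, which supplies the extra $h_K^{-1/2}$ matching the $h_K^{1/2}$ prefactor. Summing over $K$ gives \eqref{L2_prijection_sigma}.

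The substance of the lemma is the superconvergence \eqref{weight-app}, and the key observation is that $v_h|_K\in\mathcal P^j(K)$ combined with the smoothness of the weight. Because $\bar g\,v_h\in\mathcal P^j(K)$ for the \emph{constant} $\bar g=\sigma_K^{\alpha}$, the projection reproduces it exactly, so $gv_h-\Pi_j^o(gv_h)=(g-\bar g)v_h-\Pi_j^o\big((g-\bar g)v_h\big)$. By the mean value theorem (each element being convex) and \eqref{pro_sigma_2}, $\|g-\bar g\|_{L^\infty(K)}\le h_K\|\nabla\sigma_z^{\alpha}\|_{L^\infty(K)}\le Ch_K\sigma_K^{\alpha-1}$, and this single extra power of $h_K$ is exactly the source of the superconvergence. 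Using the $L^2$-stability of $\Pi_j^o$, the volume term follows directly: $\|gv_h-\Pi_j^o(gv_h)\|_{L^2(K)}\le 2\|(g-\bar g)v_h\|_{L^2(K)}\le Ch_K\sigma_K^{\alpha-1}\|v_h\|_{L^2(K)}$; multiplying by $\sigma_K^{-\alpha/2}$ and reinserting the weight gives $\|\sigma_z^{-\alpha/2}(gv_h-\Pi_j^o(gv_h))\|_{L^2(K)}\le Ch_K\|\sigma_z^{\alpha/2-1}v_h\|_{L^2(K)}$.

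The two boundary terms are handled the same way. For the $\Pi_j^o$ term I again split off $(g-\bar g)v_h$: its polynomial part $\Pi_j^o((g-\bar g)v_h)$ is controlled by the discrete trace inequality $\|\cdot\|_{L^2(\partial K)}\le Ch_K^{-1/2}\|\cdot\|_{L^2(K)}$ together with stability, while the smooth part $(g-\bar g)v_h$ is estimated by the scaled trace inequality $\|w\|_{L^2(\partial K)}\le C\big(h_K^{-1/2}\|w\|_{L^2(K)}+h_K^{1/2}\|\nabla w\|_{L^2(K)}\big)$, where $\nabla w$ is bounded using \eqref{pro_sigma_2}, the $L^\infty$ bound on $g-\bar g$, and the inverse inequality of \Cref{Inverse_inequality} applied to $\nabla v_h$; both pieces yield $Ch_K^{1/2}\sigma_K^{\alpha-1}\|v_h\|_{L^2(K)}$. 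For the face projection $\Pi_j^\partial$, since the trace of $v_h$ lies in $\mathcal P^j(F)$ and $\Pi_j^\partial$ is the best $L^2(F)$ approximation, $\|gv_h-\Pi_j^\partial(gv_h)\|_{L^2(F)}\le\|(g-\bar g)v_h\|_{L^2(F)}\le\|g-\bar g\|_{L^\infty(F)}\|v_h\|_{L^2(F)}$, and a discrete trace inequality $\|v_h\|_{L^2(F)}\le Ch_K^{-1/2}\|v_h\|_{L^2(K)}$ again gives $Ch_K^{1/2}\sigma_K^{\alpha-1}\|v_h\|_{L^2(K)}$. Multiplying each by $h^{1/2}\sigma_K^{-\alpha/2}$, reinserting the weight, and summing over the mesh produces the claimed $Ch\|\sigma_z^{\alpha/2-1}v_h\|_{\mathcal T_h}$ bound. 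I expect the main obstacle to be purely the bookkeeping of the three distinct powers of $\sigma_z$ inside and outside the norms and verifying that the quasi-constant factors from \eqref{pro_sigma_1} combine into a single constant; the one structural point — that subtracting $\bar g\,v_h\in\mathcal P^j(K)$ leaves only the $O(h_K)$ variation of the weight — is short once the reduction is set up.
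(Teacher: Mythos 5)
Your proof is correct and follows essentially the route the paper indicates for this lemma (it delegates the details to \cite{ChenMonkZhangLinfty1SINUM2021}, Lemma 3.6, describing the idea as exactly this): freeze the weight elementwise via \eqref{pro_sigma_1}--\eqref{pro_sigma_2}, reduce to the unweighted local projection, trace and inverse estimates of \Cref{approximation_of_L2} and \Cref{Inverse_inequality} --- with the superconvergence in \eqref{weight-app} obtained because subtracting the reproduced polynomial $\sigma_K^{\alpha}v_h\in\mathcal P^j(K)$, $\sigma_K:=\sigma_z(\bm x_K)$, leaves only the $O(h_K\sigma_K^{\alpha-1})$ variation of the weight --- and then sum over elements. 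The single nitpick is your appeal to convexity of $K$ for the mean value bound on $\sigma_z^{\alpha}-\sigma_K^{\alpha}$: this hypothesis is unnecessary (and not granted for general polyhedral elements), since $\sigma_z$ is smooth on all of $\mathbb R^d$ and, because $\sigma_z\ge \kappa h\ge h_K$, it is comparable within a fixed factor on the convex hull of $K$, so $\left\|\sigma_z^{\alpha}-\sigma_K^{\alpha}\right\|_{L^{\infty}(K)}\le C h_K\,\sigma_K^{\alpha-1}$ holds for arbitrary shape-regular elements.
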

The proof of \Cref{L2_noundness_sigma} can be found in \cite[Lemma 3.6]{ChenMonkZhangLinfty1SINUM2021}. The idea behind the proof is to use \eqref{pro_sigma_1} and \emph{local} estimates on each element, and then sum over all elements.  Using the same technique, we can prove the following weighted estimates, and the details are omitted.

\begin{lemma}[Weighted inverse inequality] We have
	\begin{align}\label{weighed-inverse2}
		\|\sigma_z^{\alpha}v_h\|_{\partial\mathcal T_h}\le Ch^{-\frac{1}{2}}\|\sigma_z^{\alpha}v_h\|_{\mathcal T_h}.
	\end{align}
\end{lemma}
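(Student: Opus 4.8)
The plan is to reduce the weighted trace bound to the standard (unweighted) discrete trace inequality element by element, exploiting the fact that the weight $\sigma_z^\alpha$ is essentially constant on each element. The obstruction to a direct application of an inverse inequality is that the product $\sigma_z^\alpha v_h$ is \emph{not} a polynomial, since $\sigma_z^\alpha$ is not polynomial; hence I cannot feed $\sigma_z^\alpha v_h$ into a polynomial inverse inequality. The remedy, exactly as in the proof sketch for \Cref{L2_noundness_sigma}, is to strip the weight off using \eqref{pro_sigma_1}, apply the inverse inequality to the genuine polynomial $v_h$, and then reinstate the weight, paying only a bounded factor for the spatial variation of $\sigma_z^\alpha$ across $K$.

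Concretely, I would fix an element $K\in\mathcal T_h$ and begin with
$$\|\sigma_z^\alpha v_h\|_{L^2(\partial K)}^2 \le \Big(\max_{x\in K}\sigma_z(x)^\alpha\Big)^2 \|v_h\|_{L^2(\partial K)}^2.$$
Since $v_h|_K$ is a polynomial, the standard discrete trace inequality (which follows from shape-regularity by the scaling argument underlying \Cref{Inverse_inequality}) gives $\|v_h\|_{L^2(\partial K)}^2\le C h_K^{-1}\|v_h\|_{L^2(K)}^2$. Bounding $\|v_h\|_{L^2(K)}^2 \le (\min_{x\in K}\sigma_z(x)^\alpha)^{-2}\|\sigma_z^\alpha v_h\|_{L^2(K)}^2$ from below and collecting terms yields
$$\|\sigma_z^\alpha v_h\|_{L^2(\partial K)}^2 \le C h_K^{-1}\, \frac{(\max_{x\in K}\sigma_z^\alpha)^2}{(\min_{x\in K}\sigma_z^\alpha)^2}\, \|\sigma_z^\alpha v_h\|_{L^2(K)}^2.$$
By \eqref{pro_sigma_1} the ratio of the maximum to the minimum of $\sigma_z^\alpha$ over $K$ is bounded by a constant depending only on $\alpha$, so the middle factor is at most $C$.

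Finally, quasi-uniformity of the mesh gives $h_K^{-1}\le C h^{-1}$, and summing the local estimates over all $K\in\mathcal T_h$ produces
$$\|\sigma_z^\alpha v_h\|_{\partial\mathcal T_h}^2 \le C h^{-1}\|\sigma_z^\alpha v_h\|_{\mathcal T_h}^2,$$
which is the claimed bound after taking square roots. I do not expect any genuine obstacle here; the argument is routine once the weight is handled. The only point requiring a word of care is the sign of $\alpha$ in \eqref{pro_sigma_1}, but since that estimate controls the oscillation of $\sigma_z^\alpha$ across each element for every real $\alpha$, the bound on the max-to-min ratio holds regardless of sign and the argument is unaffected.
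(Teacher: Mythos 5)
Your proof is correct and is precisely the argument the paper intends: the paper omits the details but states that the lemma follows "using the same technique" as \Cref{L2_noundness_sigma}, namely stripping the weight elementwise via \eqref{pro_sigma_1}, applying the standard local trace/inverse inequality to the polynomial $v_h$, reinstating the weight, and summing over elements using quasi-uniformity. Your remark on the sign of $\alpha$ is a sensible precaution (the constant in \eqref{pro_sigma_1} should be read as $3^{|\alpha|/2}$), and it does not affect the argument.
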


\begin{lemma}[Weighted Oswald interpolation]\cite{MR2034620}\label{Oswarld_interpolation}
	There exists an interpolation operator $\mathcal I_h: W_h\to W_h\cap H_0^1(\Omega)$, such that 
	\begin{align}\label{Oswarld_interpolation_b}
		\|\sigma_z^{\alpha} (\mathcal I_hv_h-v_h)\|_{\mathcal T_h} + h	\|\sigma_z^{\alpha}\nabla (\mathcal I_hv_h-v_h)\|_{\mathcal T_h}&\le C\|\sigma_z^{\alpha}h^{\frac{1}{2}}[\![v_h]\!]\|_{\mathcal E_h}.
	\end{align}
\end{lemma}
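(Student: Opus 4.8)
The plan is to construct $\mathcal I_h$ by the standard nodal-averaging procedure and then reinstate the weight $\sigma_z^{\alpha}$ using the slow-variation property \eqref{pro_sigma_1}. Fix a Lagrange basis for the continuous subspace $W_h\cap H_0^1(\Omega)$. For each interior Lagrange node $\bm x_i$, shared by elements $K_1,\dots,K_m\in\mathcal T_h$, define $(\mathcal I_h v_h)(\bm x_i)$ to be the arithmetic average $\frac1m\sum_{j=1}^m v_h|_{K_j}(\bm x_i)$; for each node lying on $\partial\Omega$ set $(\mathcal I_h v_h)(\bm x_i)=0$. This produces $\mathcal I_h v_h\in W_h\cap H_0^1(\Omega)$.

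The second step is the unweighted local estimate. On a fixed element $K$, the difference $\mathcal I_h v_h-v_h$ is a polynomial in $\mathcal P^{k+1}(K)$ whose nodal values are, by construction, differences between $v_h|_K$ and $v_h$ on adjacent elements at the shared nodes; each such difference is a pointwise value of a jump $[\![v_h]\!]$ across a face of the patch $\omega_K$ consisting of $K$ together with all elements sharing a node with it. A scaling argument on the reference element, exactly as in \cite{MR2034620}, bounds the polynomial by its nodal values and yields, for each $K$,
\begin{align*}
\|\mathcal I_h v_h-v_h\|_{K}^2 + h_K^2\|\nabla(\mathcal I_h v_h-v_h)\|_{K}^2 \le C\,h_K \sum_{F\in\mathcal E_h,\,F\subset\overline{\omega_K}} \|[\![v_h]\!]\|_F^2,
\end{align*}
where shape-regularity and quasi-uniformity guarantee that $\operatorname{card}(\omega_K)$ and the number of faces in $\overline{\omega_K}$ are uniformly bounded and that all element sizes in $\omega_K$ are comparable to $h$.

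The final step reinstates the weight. On the patch $\omega_K$ all points lie within distance $Ch$ of one another, and since $\theta=\kappa h\ge h$ we have $\sigma_z\ge\theta$ together with $|\sigma_z(\bm x)-\sigma_z(\bm y)|\le|\bm x-\bm y|\le Ch\le C\sigma_z$; hence $\sigma_z^{\alpha}$ is comparable, with constant depending only on $\alpha$ as in \eqref{pro_sigma_1}, over all of $\omega_K$ and its faces. I therefore pull a representative value $\sigma_z^{\alpha}(\bm x_K)$ out of both sides of the local estimate, restore $\sigma_z^{\alpha}$ up to this constant, and sum over $K\in\mathcal T_h$. Using the bounded overlap of the patches $\{\omega_K\}$ to control the repeated counting of faces, I obtain
\begin{align*}
\|\sigma_z^{\alpha}(\mathcal I_h v_h-v_h)\|_{\mathcal T_h}^2 + h^2\|\sigma_z^{\alpha}\nabla(\mathcal I_h v_h-v_h)\|_{\mathcal T_h}^2 \le C\sum_{F\in\mathcal E_h} h\,\|\sigma_z^{\alpha}[\![v_h]\!]\|_F^2,
\end{align*}
which is \eqref{Oswarld_interpolation_b} after taking square roots.

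The main obstacle is the weight-matching in this last step: the local Oswald bound couples the value of $\mathcal I_h v_h-v_h$ on $K$ to jumps on faces of the \emph{neighboring} elements, so one must know that $\sigma_z^{\alpha}$ on $K$ is comparable to $\sigma_z^{\alpha}$ on those remote faces. This is precisely where the choice $\theta=\kappa h\ge h$ is essential: it forces $\sigma_z$ to be bounded below by the mesh size, so its relative variation over any $O(h)$-patch is $O(1)$, extending \eqref{pro_sigma_1} from a single element to the whole patch $\omega_K$. Everything else is the classical construction of \cite{MR2034620} together with the bounded-overlap bookkeeping.
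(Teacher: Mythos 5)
Your proof is correct and follows essentially the same route the paper intends: the paper gives no proof of this lemma, citing \cite{MR2034620} for the classical nodal-averaging (Oswald) construction and remarking just above that weighted estimates follow from the unweighted \emph{local} ones combined with the slow-variation property \eqref{pro_sigma_1} and summation over elements --- which is exactly your patchwise weight-freezing argument, with the lower bound $\sigma_z\ge\theta=\kappa h$ controlling the relative variation of $\sigma_z^{\alpha}$ over each $O(h)$-patch. The only caveat, inherited from the paper itself, is that the Lagrange nodal-averaging construction presupposes simplicial (or at least nodal) structure, so on the paper's general polyhedral elements one would formally carry it out on a shape-regular simplicial submesh; this does not change the estimate.
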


\section{$L^\infty$ norm estimates}\label{Linftyestimates}
\subsection{Main result}
Now, we  state the main result of our paper:
\begin{theorem}\label{main_result_Linfty_norm}
	Let $(\bm q, u)$ and $(\bm q_h, u_h,\widehat u_h)$ be the solution of \eqref{Poisson} and \eqref{HDG_discrete2}, respectively.  First, if  $u\in L^\infty(\Omega)$ and  $\bm q\in \bm L^\infty(\Omega)$, then we have the following stability bounds:
	\begin{subequations}\label{maxmimu_norm}
		\begin{align}
			\|u_h\|_{L^\infty(\Omega)} &\le   C(\|u\|_{L^\infty(\Omega)}   + h\|\bm q\|_{\bm L^\infty(\Omega)}),\label{maxmimu_norm_u} \\
			\|\bm q_h\|_{L^\infty(\Omega)} &\le  C \|\bm q\|_{\bm L^\infty(\Omega)}. \label{maxmimu_norm_q} 
		\end{align}	
		Second, if $(\bm q, u)\in \bm W^{k+1,\infty}(\Omega) \times  W^{k+2,\infty}(\Omega)$,  we have
		\begin{align}
			\|\bm q- \bm q_h\|_{\bm L^\infty(\Omega)} &\le Ch^{k+1}(|\bm q|_{\bm W^{k+1,\infty}(\Omega)} + |u|_{ W^{k+2,\infty}(\Omega)}),\label{thmL_infty_q}\\
			\|u- u_h\|_{L^\infty(\Omega)} &\le Ch^{k+2}(|\bm q|_{\bm W^{k+1,\infty}(\Omega)} + |u|_{ W^{k+2,\infty}(\Omega)})\label{thmL_infty_u}.
		\end{align}
	\end{subequations}
\end{theorem}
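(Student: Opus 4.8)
The plan is to drive all four bounds with a single duality machine based on the symmetry of $\mathscr B$ (\Cref{eq_B}) and the weight $\sigma_z$. Fix a point $\bm z\in\Omega$ where the relevant component of the error is (essentially) maximized, and introduce a smoothed Dirac mass $\tilde\delta_z$ supported in the element containing $\bm z$, normalized so that $(\tilde\delta_z,p)_{\mathcal T_h}=p(\bm z)$ for all $p\in\mathcal P^{k+1}$ and $\|\tilde\delta_z\|_{L^1(\Omega)}\le C$. For the scalar error I would split $u-u_h=(u-\Pi_{k+1}^o u)+(\Pi_{k+1}^o u-u_h)$; the first piece is $O(h^{k+2})$ in $L^\infty$ by \Cref{approximation_of_L2}, while the second, being a polynomial, satisfies $(\Pi_{k+1}^o u-u_h)(\bm z)=(\tilde\delta_z,\Pi_{k+1}^o u-u_h)_{\mathcal T_h}$. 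Define the regularized Green's function $(\bm\Phi^z,G^z)$ as the exact solution of the dual Poisson problem $\bm\Phi^z+\nabla G^z=\bm 0$, $\nabla\cdot\bm\Phi^z=\tilde\delta_z$ in $\Omega$, $G^z=0$ on $\partial\Omega$ (and the analogous problem with data built from $\tilde\delta_z\bm e_i$ when estimating a component of $\bm q$). Since the exact solution is consistent with $\mathscr B$ in the sense of \eqref{HDG_exact}, symmetry together with the Galerkin orthogonality of the primal error yields the representation
\begin{equation*}
(\Pi_{k+1}^o u-u_h)(\bm z)=-\mathscr B\bigl(\bm q-\bm q_h,\,u-u_h,\,u-\widehat u_h;\ \bm\Phi^z-\bm\Phi^z_h,\,G^z-G^z_h,\,G^z-\widehat G^z_h\bigr)-(\tilde\delta_z,u-\Pi_{k+1}^o u)_{\mathcal T_h},
\end{equation*}
where $(\bm\Phi^z_h,G^z_h,\widehat G^z_h)$ is the HDG approximation of the dual solution; the last term is harmless and the first reduces the pointwise error to a bilinear pairing of the primal error with the dual approximation error.

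Expanding the pairing through \eqref{def_B}, each term is split by a weighted Cauchy--Schwarz inequality, attaching a power $\sigma_z^{-\alpha}$ to the primal factor and $\sigma_z^{\alpha}$ to the dual factor, for example $(\bm q-\bm q_h,\bm\Phi^z-\bm\Phi^z_h)_{\mathcal T_h}\le \|\sigma_z^{-\alpha}(\bm q-\bm q_h)\|_{\mathcal T_h}\,\|\sigma_z^{\alpha}(\bm\Phi^z-\bm\Phi^z_h)\|_{\mathcal T_h}$, and likewise for the scalar, divergence, face, and stabilization terms (the face terms handled with \eqref{sigma-bd} and \eqref{weighed-inverse2}). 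The exponent $\alpha$ is chosen so that the weight integral is controlled by \eqref{sigma_lemma_2_constant}; the primal factors then reduce to the target $L^\infty$ norms times an explicit power of $\theta=\kappa h$, and the dual factors are precisely the weighted $L^2$ norms of the Green's function error and its flux. Retaining $(u_h,\bm q_h)$ in place of the error in the same representation produces the stability bounds \eqref{maxmimu_norm_u}--\eqref{maxmimu_norm_q}, with the primal factor estimated by $\|u\|_{L^\infty(\Omega)}+h\|\bm q\|_{\bm L^\infty(\Omega)}$. The convergence rates \eqref{thmL_infty_q}--\eqref{thmL_infty_u} then follow by applying this $L^\infty$ stability to $(\bm q_h-\bm\Pi_k^o\bm q,\,u_h-\Pi_{k+1}^o u)$, which---thanks to the \emph{strong} commutativity of the Lehrenfeld HDG projection---solves the discrete system with data equal to the projection residuals, and by finishing with the triangle inequality and the (weighted) approximation estimates of \Cref{approximation_of_L2} and \Cref{L2_noundness_sigma}.

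The heart of the argument, and where I expect the real difficulty, is establishing the two weighted $L^2$ estimates for the dual HDG approximation, namely $\|\sigma_z^{\alpha}(\bm\Phi^z-\bm\Phi^z_h)\|_{\mathcal T_h}\le C$ and $\|\sigma_z^{\alpha}(G^z-G^z_h)\|_{\mathcal T_h}\le C\theta$ (with the matching face and stabilization quantities), uniformly in $\bm z$ and $h$. I would prove these by a weighted energy argument: test the dual error equations against $\sigma_z^{2\alpha}$ times the error and use the positivity identity of \Cref{eq_B} to extract the weighted flux norm plus the stabilization term, then control the commutators generated by moving the non-polynomial weight through $\Pi_k^o$, $\Pi_{k+1}^o$ and $\Pi_k^\partial$ using the weighted superapproximation \eqref{weight-app}, the weight bounds \eqref{pro_sigma_1}--\eqref{pro_sigma_2}, and the weighted inverse inequality \eqref{weighed-inverse2}. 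The strong commutative property of this particular projection is essential here---it is the reason for changing the finite element spaces and numerical flux---and the weighted Oswald interpolation of \Cref{Oswarld_interpolation} is used to pass between the broken discrete space and $H_0^1(\Omega)$ when bounding the jump contributions. The scalar estimate for $G^z$ is the more delicate of the two, since it requires an additional Aubin--Nitsche duality step layered on top of the flux estimate together with weighted elliptic regularity of $G^z$ near its singularity at $\bm z$.

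Finally, to obtain the \emph{sharp} bounds without logarithmic factors I would follow \cite{Scott_NM_2015}: instead of summing a dyadic decomposition of $\Omega$ (which introduces a $|\log h|$), the weighted estimates are closed in the form $M\le \epsilon\,M+C(\text{data})$, where $M$ is the $L^\infty$ quantity under study; taking the free parameter $\kappa$ in $\theta=\kappa h$ large enough makes the relevant weight integrals small, so the term $\epsilon M$ is absorbed into the left-hand side and no logarithm ever appears. Assembling the representation formula, the weighted Cauchy--Schwarz splitting, the two Green's-function estimates, and this absorption step yields \eqref{maxmimu_norm_u}--\eqref{maxmimu_norm_q} directly and, after inserting the approximation rates, the $h^{k+1}$ and $h^{k+2}$ bounds \eqref{thmL_infty_q}--\eqref{thmL_infty_u}. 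The principal obstacle throughout is the weighted scalar estimate for $G^z$; the remaining steps are careful but essentially standard weighted bookkeeping.
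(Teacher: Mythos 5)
Your overall architecture coincides with the paper's: regularized Green's functions in mixed form driven by smoothed delta data, weighted $L^2$ duality with weight exponent $\mu=d+\lambda$ so that \eqref{sigma_lemma_2_constant} applies, absorption of weighted error terms by taking $\kappa$ large in $\theta=\kappa h$ (this, not an outer fixed-point, is where the paper kills the logarithm), the Oswald interpolant of \Cref{Oswarld_interpolation} to return to $H_0^1(\Omega)$, and --- correctly identified as the hard part --- a layered Aubin--Nitsche step with weighted $H^2$ regularity to obtain the weighted scalar estimate for the dual variable (the paper's auxiliary problem $(\bm\Phi_3,\Psi_3)$ with data $\sigma_z^{\mu-2}\mathcal I_h\mathcal E_h^{\Psi_1}$). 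Your Step for the stability bounds \eqref{maxmimu_norm_u}--\eqref{maxmimu_norm_q} is also essentially the paper's Step 3.

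The genuine gap is in your derivation of the rates \eqref{thmL_infty_q}--\eqref{thmL_infty_u}. You claim they follow ``by applying this $L^\infty$ stability to $(\bm q_h-\bm\Pi_k^o\bm q,\,u_h-\Pi_{k+1}^o u)$, which solves the discrete system with data equal to the projection residuals.'' That triple does satisfy a discrete system, namely \eqref{error_equa_Phi}-type equations whose right-hand side is the face-residual functional $\mathscr E(\bm q,u;\bm v_h,w_h,\widehat w_h)=\langle(\bm q-\bm\Pi_k^o\bm q)\cdot\bm n,w_h-\widehat w_h\rangle_{\partial\mathcal T_h}+\langle h_K^{-1}(u-\Pi_{k+1}^o u),\Pi_k^\partial w_h-\widehat w_h\rangle_{\partial\mathcal T_h}$; the strong commutativity of the Lehrenfeld projection does \emph{not} make this vanish (it only removes volume residuals). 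But \eqref{maxmimu_norm_u}--\eqref{maxmimu_norm_q} are proved only for discrete solutions of \eqref{HDG_full_discrete} with an $L^2$ load $(f,w_h)_{\mathcal T_h}$, and they bound the discrete solution by $L^\infty$ norms of the corresponding \emph{exact PDE solution} --- a notion that does not exist for data given by a face functional. So the shortcut is unjustified as stated. The paper instead handles this step by a direct duality: it tests \eqref{Green_HDG_1} with $(\bm\Pi_k^o\bm q-\bm q_h,\Pi_{k+1}^o u-u_h,\Pi_k^\partial u-\widehat u_h)$, uses \eqref{commute} and consistency \eqref{HDG_exact} to replace the discrete error by pure projection errors of $(\bm q,u)$, and is left with exactly two face pairings of the dual discrete solution against $(\bm\Pi_k^o\bm q-\bm q)\cdot\bm n$ and $\Pi_{k+1}^o u-u$, which are estimated in weighted norms via \eqref{weight_err_1}, \eqref{L2_prijection_sigma}, \eqref{sigma-bd} and \eqref{sigma_lemma_2_constant}. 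Your alternative representation (pairing the full primal error with the full dual error plus $\epsilon$-absorption of the $L^\infty$ error) could in principle be repaired, but it requires a coupled absorption of $\|u-u_h\|_{L^\infty(\Omega)}$ and $\|\bm q-\bm q_h\|_{\bm L^\infty(\Omega)}$ with the correct relative $h$-scaling (e.g., working with $\|\bm q-\bm q_h\|_{\bm L^\infty(\Omega)}+h^{-1}\|u-u_h\|_{L^\infty(\Omega)}$) and separate control of the trace factor $u-\widehat u_h$; the paper's orthogonality reduction avoids any outer absorption entirely.
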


\begin{remark}\label{pre}
	In \cite{ChenMonkZhangLinfty1SINUM2021}, we obtained quasi-optimal $L^\infty$ norm error estimates, but the domain is restricted to  two dimensional space, the mesh  has to  be triangular and the polynomial degree $k\ge 1$. The result in \Cref{main_result_Linfty_norm} (which holds in $\mathbb R^2$ or $\mathbb R^3$) relaxes all the above constraints. It is worthwhile mentioning that the constant $C$ in \eqref{main_result_Linfty_norm} does not depend on logarithmic factors $(\log h)$, which is the \emph{first} such result for mixed methods  in the literature.
\end{remark}

In \cite{ChenMonkZhangLinfty1SINUM2021} we showed that a useful application of $L^\infty$ estimates is  to prove flux estimates on interfaces in the mesh. We give the corresponding result next.
\begin{corollary}\label{optimal_es_boundary}
	Let $\Gamma$ be a finite union of line segments in 2D or plan surface in 3D  such that $\Omega$ is decomposed into finitely many Lipschitz domains by $\Gamma$.  Define $\mathcal F_h^\Gamma$ by
	\begin{align*}
		\mathcal F_h^\Gamma = \{F\in \mathcal F_h\,:\,\textup{measure} (F\cap \Gamma)>0\}.
	\end{align*}
	We assume $\Gamma$ can be written as the union of $\mathcal O(h^{1-d})$ edges or faces in $\mathcal F_h$, i.e., $\bar \Gamma = \bigcup_{F\in\mathcal F_h^\Gamma\subset \mathcal F_h} \bar F$.  If the  assumptions in \Cref{main_result_Linfty_norm} hold, then we have:
	\begin{subequations}
		\begin{align}
			\| \bm q - \bm q_h \|_{\bm L^2(\Gamma)} &\le Ch^{k+1}(|\bm q|_{\bm W^{k+1,\infty}(\Omega)} + |u|_{ W^{k+2,\infty}(\Omega)}),\label{L_infty_q}\\
			\| u- u_h \|_{L^2(\Gamma)} &\le Ch^{k+2}(|\bm q|_{\bm W^{k+1,\infty}(\Omega)} + |u|_{ W^{k+2,\infty}(\Omega)}). \label{L_infty_u}
		\end{align}
	\end{subequations}	
\end{corollary}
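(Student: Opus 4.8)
The plan is to derive the $L^2(\Gamma)$ estimates in \Cref{optimal_es_boundary} from the pointwise $L^\infty$ estimates already established in \Cref{main_result_Linfty_norm} by a simple covering and summation argument. The key observation is that $\Gamma$ is a codimension-one object covered by $\mathcal{O}(h^{1-d})$ faces, and on each such face the integrand $|\bm q - \bm q_h|^2$ (resp.\ $|u-u_h|^2$) is controlled uniformly by the square of the $L^\infty$ error, so the total $L^2(\Gamma)$ norm picks up only a factor measuring the area (length in 2D) of $\Gamma$, which is $\mathcal{O}(1)$.

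\medskip

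First I would write, for the flux,
\begin{align*}
	\| \bm q - \bm q_h \|_{\bm L^2(\Gamma)}^2 = \int_\Gamma |\bm q - \bm q_h|^2 \,{\rm d}\bm x \le \|\bm q - \bm q_h\|_{\bm L^\infty(\Omega)}^2 \,\textup{measure}(\Gamma).
\end{align*}
Since $\Gamma$ is a fixed finite union of segments or planar surfaces, $\textup{measure}(\Gamma)$ is a constant independent of $h$; taking square roots and inserting the bound \eqref{thmL_infty_q} immediately yields \eqref{L_infty_q}. The scalar estimate \eqref{L_infty_u} follows identically from \eqref{thmL_infty_u}. One subtlety worth addressing explicitly is that $\bm q_h$ is only piecewise smooth and may be double-valued across element interfaces that lie on $\Gamma$; I would handle this by restricting the integral to the (finitely many) faces $F\in\mathcal F_h^\Gamma$ and using the elementwise bound $\|\bm q - \bm q_h\|_{\bm L^\infty(K)}\le \|\bm q-\bm q_h\|_{\bm L^\infty(\Omega)}$ on each $K$ adjacent to such a face, so that
\begin{align*}
	\| \bm q - \bm q_h \|_{\bm L^2(\Gamma)}^2 \le \sum_{F\in\mathcal F_h^\Gamma} \|\bm q-\bm q_h\|_{\bm L^\infty(\Omega)}^2 \,\textup{measure}(F) = \|\bm q-\bm q_h\|_{\bm L^\infty(\Omega)}^2 \,\textup{measure}(\Gamma),
\end{align*}
where the last equality uses $\bar\Gamma = \bigcup_{F\in\mathcal F_h^\Gamma}\bar F$.

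\medskip

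Honestly, there is no serious obstacle here: the hypothesis $\bar\Gamma = \bigcup_{F\in\mathcal F_h^\Gamma}\bar F$ is exactly what makes the reduction from a surface integral to pointwise control clean, and the $\mathcal{O}(h^{1-d})$ face count never actually enters the final bound because the faces tile a set of fixed measure rather than being summed with individual $h$-dependent constants. The only point requiring mild care is confirming that the trace of $\bm q_h$ onto $\Gamma$ is well defined and bounded by the elementwise $L^\infty$ norm, which is immediate for piecewise polynomials. I therefore expect the entire proof of \Cref{optimal_es_boundary} to be a two-line consequence of \Cref{main_result_Linfty_norm}, and the interesting mathematical content lies entirely in the $L^\infty$ estimates themselves rather than in this corollary.
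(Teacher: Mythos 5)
Your proof is correct and is essentially the argument the paper intends: the paper defers the details to the proof of Theorem~4.1 in \cite{ChenMonkZhangLinfty1SINUM2021}, which performs the same reduction face by face --- H\"older's inequality on each $F\in\mathcal F_h^\Gamma$ contributes $\textup{measure}(F)=\mathcal O(h^{d-1})$ and the $\mathcal O(h^{1-d})$ face count then yields the $\mathcal O(1)$ factor that you obtain in one step as $\textup{measure}(\Gamma)$ --- combined with the sharp bounds \eqref{thmL_infty_q}--\eqref{thmL_infty_u} of \Cref{main_result_Linfty_norm}. Your handling of the one-sided traces of the piecewise polynomials via the elementwise $L^\infty$ bound is exactly the point the alignment hypothesis $\bar\Gamma=\bigcup_{F\in\mathcal F_h^\Gamma}\bar F$ is there to secure, so nothing is missing.
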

The proof of \eqref{optimal_es_boundary} is based on the proof in \cite[Theorem 4.1]{ChenMonkZhangLinfty1SINUM2021} and \Cref{main_result_Linfty_norm}.

The remainder of this section will be devoted to proving our main result, \Cref{main_result_Linfty_norm}.

\subsection{Proof of \Cref{main_result_Linfty_norm}} 
{Before starting  the proof of \Cref{main_result_Linfty_norm} we recall the definition of suitable regularized Green's functions.
	We follow the notation of Girault, Nochetto and Scott \cite{Scott_NM_2015}. 
	Let  $\varphi_h$ be a polynomial in $\mathcal P^k$ on each element,  $\bm x_M$ be the  point such that $|\varphi(\bm x_M)| = \max_{\bm x\in \bar\Omega}|\varphi(\bm x)|$,  $K$ be an element containing $\bm x_M$ and  $B \subset K$ be the {disk of} radius $\rho_K$ inscribed in $K$. Then there exists a smooth function $\delta_{M}\in C_0^{\infty}(B)$ supported in $B$ such that
	\begin{align*}
		\int_{\Omega} \delta_{M} ~{\rm d}\bm x &=1 \\
		\left\|\varphi_h\right\|_{L^{\infty}(\Omega)} &=\left|\int_{\Omega} \delta_{M} \varphi_h ~{\rm d}\bm x \right|,	
	\end{align*}
	and
	\begin{align}\label{Bound_L1}
		\left\|\delta_{M}\right\|_{L^{t}(B)} &\leq \frac{C_{t}}{\rho_{K}^{d\left(1-\frac{1}{t}\right)}},
	\end{align}
	for any number $t$ with $1 \leq t \leq \infty$, where the constant $C_{t}$ independent of mesh size $h$.  Here we interpret $\dfrac{1}{t}=0$ in the case $t=\infty$.}
%{
%\color{blue}
%Let prove the last equality. By the definition in \cite{Scott_NM_2015}, we know
%\begin{align*}
%	\delta_M=\delta p_M,
%\end{align*}
%where $\|\delta\|_{L^{\infty}(\mathbb R^d)}\le C\rho_K^{-d}$ and $p_M$ is a polynomial. Then
%by scaling argument we have
%
%\begin{align*}
%	\|\delta_M\|_{L^{t}(B)}&\le 	\|\delta\|_{L^{\infty}(B)}\|p_M\|_{L^{t}(B)}\\
%	                       & \le C\rho_K^{-d}\|p_M\|_{L^{t}(B)}\\
%	                       &\le C\rho_K^{d(\frac 1 t-1)}\|\hat{p_M}\|_{L^t(\hat B)}\\
%	                       &\le C\rho_K^{d(\frac 1 t-1)},
%\end{align*}
%which finishes our proof.
%
%
%
%}
%
%}

The main idea behind the proof of $L^\infty$ norm estimates is to use the above defined smooth $\delta_{M}$ function. Given a scalar function $\delta_{1,z}$ and a vector $\bm \delta_{2,z}$ of the above type,  we define two regularized Green's functions for problem \eqref{Poisson} in mixed form:
\begin{align}\label{Green1}
	\bm \Phi_1+\nabla \Psi_1=\bm 0 \; \text{in}\ \Omega, \quad \nabla\cdot\bm  \Phi_1= \delta_{1,z}\;\text{in}\ \Omega,\quad \Psi_1 = 0\; \text{on}\ \partial\Omega,
\end{align}
and
\begin{align}\label{Green2}
	\bm \Phi_2+\nabla \Psi_2= \bm \delta_{2,z}\; \text{in}\ \Omega, \quad \nabla\cdot\bm  \Phi_2= 0\; \text{in}\ \Omega, \quad \Psi_2 = 0\; \text{on}\ \partial\Omega.
\end{align}
%and
%\begin{equation}\label{Green3}
%\begin{split}
%c\bm \Phi_3+\nabla \Psi_3&=\bm 0\qquad\qquad\text{in}\ \Omega,\\
%\nabla\cdot\bm  \Phi_2&= \bm\Theta\qquad\qquad~\text{in}\ \Omega,\\
%\Psi_3 &= 0\qquad\qquad~\text{on}\ \partial\Omega.
%\end{split}
%\end{equation}

{We need two auxiliary results before starting the proof of  \Cref{main_result_Linfty_norm}. The proof of  \Cref{regularity_Green_function} can be found in \cite{Scott_JMPA_2005}.
	\begin{lemma}[Regularity for $\Psi_1$ and  $\Psi_2$]\label{regularity_Green_function} Suppose $z\in \Omega$, let $0<\lambda <1$ and $\mu=d+\lambda$.
		Let $\Psi_1$ and $\Psi_2$ be the solution of \eqref{Green1} and \eqref{Green2}, respectively. Then we have:
		\begin{subequations}
			\begin{align}
				&\|\Psi_1\|_{H^2(\Omega)}\le Ch^{-\frac d 2},&\|\sigma_z^{\frac{\mu}{2}-1} D \Psi_1\|_{L^2(\Omega)} +\|\sigma_z^{\frac{\mu}{2}} D^2 \Psi_1\|_{L^2(\Omega)}\le Ch^{\frac{\lambda}{2}},\label{regularity_psi_1}\\
				&\|\Psi_2\|_{H^2(\Omega)}\le Ch^{-\frac d 2-1},&\|\sigma_z^{\frac{\mu}{2}-1} D \Psi_2\|_{L^2(\Omega)} +\|\sigma_z^{\frac{\mu}{2}} D^2 \Psi_2\|_{L^2(\Omega)}\le Ch^{\frac{\lambda}{2}-1}.\label{regularity_psi_2}
			\end{align}
		\end{subequations}
	\end{lemma}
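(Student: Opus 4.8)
The plan is to first eliminate the flux variables and reduce both regularized Green's functions to scalar Poisson problems. Applying $\nabla\cdot$ to the first equations in \eqref{Green1} and \eqref{Green2} and using the divergence constraints gives $-\Delta\Psi_1=\delta_{1,z}$ and $-\Delta\Psi_2=-\nabla\cdot\bm\delta_{2,z}$ in $\Omega$, each with $\Psi_i=0$ on $\partial\Omega$. Each of \eqref{regularity_psi_1} and \eqref{regularity_psi_2} then consists of a global (unweighted) $H^2$ bound and a localized (weighted) bound for a Poisson problem on the convex polyhedron $\Omega$, and I would establish the two types of bound by different arguments.

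For the global bounds I would invoke $H^2$ elliptic regularity on the convex domain, $\|\Psi\|_{H^2(\Omega)}\le C\|\Delta\Psi\|_{L^2(\Omega)}$, together with the $L^2$ size of the regularized sources from \eqref{Bound_L1}. Taking $t=2$ in \eqref{Bound_L1} and using $\rho_K\sim h$ gives $\|\delta_{1,z}\|_{L^2}\le Ch^{-d/2}$, hence $\|\Psi_1\|_{H^2}\le Ch^{-d/2}$; for $\Psi_2$ the source is a derivative of such a bump, which costs one extra inverse power of $h$, so that $\|\nabla\cdot\bm\delta_{2,z}\|_{L^2}\le Ch^{-d/2-1}$ and $\|\Psi_2\|_{H^2}\le Ch^{-d/2-1}$.

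The weighted bounds are the heart of the matter. I would prove a weighted $H^2$ a priori estimate of the form $\|\sigma_z^{\mu/2}D^2\Psi\|_{L^2}^2\le C\big(\|\sigma_z^{\mu/2}\Delta\Psi\|_{L^2}^2+\|\sigma_z^{\mu/2-1}D\Psi\|_{L^2}^2\big)$, obtained by integrating $\sigma_z^{\mu}|D^2\Psi|^2$ by parts twice to expose $\Delta\Psi$, handling every derivative that falls on the weight through $|\nabla(\sigma_z^{\mu})|\le C\sigma_z^{\mu-1}$ from \eqref{pro_sigma_2}, and absorbing the resulting cross terms $\sigma_z^{\mu-1}|D^2\Psi|\,|D\Psi|$ by Young's inequality. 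The boundary contributions generated in this process carry a favorable sign because $\Omega$ is convex and $\Psi=0$ on $\partial\Omega$; this is precisely the mechanism behind $H^2$ regularity on convex domains. A companion weighted $H^1$ estimate, obtained by testing the equation with $\sigma_z^{\mu-2}\Psi$, then controls $\|\sigma_z^{\mu/2-1}D\Psi\|_{L^2}$ in terms of a weighted pairing with the source and a lower-order weighted $L^2$ term, and iterating this descent together with the weight integrability \eqref{sigma_lemma_2_constant} closes the hierarchy. The $h$-powers finally come from the source: since $\delta_{1,z}$ is supported in a ball of radius $\sim h$ about $z$, on which $\sigma_z\sim\theta=\kappa h$, one gets $\|\sigma_z^{\mu/2}\Delta\Psi_1\|_{L^2}^2\le C\theta^{\mu}\|\delta_{1,z}\|_{L^2}^2\le Ch^{d+\lambda}h^{-d}=Ch^{\lambda}$, i.e.\ the claimed $h^{\lambda/2}$ in \eqref{regularity_psi_1}, while the more singular dipole source for $\Psi_2$ is near-field dominated and produces the extra factor $h^{-1}$ in \eqref{regularity_psi_2}.

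The step I expect to be the main obstacle is the weighted $H^2$ a priori estimate itself: with the singular, $z$-centered weight $\sigma_z^{\mu}$ one cannot simply quote convex-domain regularity, and the boundary terms produced by the double integration by parts must be shown to have the right sign (or be otherwise controlled) uniformly in $z$ and in the regularization scale $\theta$. Correctly tracking the descending hierarchy of weighted lower-order terms so that the final exponents are sharp, together with the near-field analysis of the differentiated bump $\nabla\cdot\bm\delta_{2,z}$ for $\Psi_2$, is the other delicate point; this is exactly the weighted-norm machinery developed in \cite{Scott_JMPA_2005}.
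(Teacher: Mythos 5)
You should first be aware that the paper contains no proof of this lemma at all: it is dispatched with a one-line citation to \cite{Scott_JMPA_2005}. So your sketch has to be measured against the weighted-norm machinery of Girault--Nochetto--Scott rather than an in-paper argument, and in broad strokes you have reconstructed that machinery faithfully: the reduction to scalar Poisson problems, the convex-domain $H^2$ bounds with $\|\delta_{1,z}\|_{L^2}\le Ch^{-d/2}$, and the weighted a priori estimate with convexity supplying the sign of the boundary terms are all the right ingredients, and your treatment of $\Psi_2$ is essentially correct. Two smaller caveats: \eqref{Bound_L1} bounds only $\delta_M$ itself, so $\|\nabla\cdot\bm\delta_{2,z}\|_{L^2}\le Ch^{-d/2-1}$ requires the derivative scaling of the mollifier construction (true for the standard scaled bump, but it must be invoked separately), and the weighted integration by parts on a polyhedron needs the face-by-face argument (tangential derivatives of $\Psi$ vanish on flat faces) rather than a smooth-boundary curvature identity.

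The genuine gap is in the $\Psi_1$ half of \eqref{regularity_psi_1}, and it is not a repairable technicality of your route. Your power counting tracks only the source/near-field contribution ($\sigma_z\sim\theta$ on $\operatorname{supp}\delta_{1,z}$ gives $h^{\lambda}$ for the squared source term), but the left-hand side carries far-field mass that does not scale with $\theta$ at all. Away from $z$ one has $\Psi_1\approx G(\cdot,z)$ with $|DG|\sim r^{1-d}$ and $|D^2G|\sim r^{-d}$, $r=|\bm x-\bm z|$, so
\begin{align*}
\int_{\theta\le|\bm x-\bm z|\le \operatorname{diam}\Omega}\sigma_z^{\mu}\,|D^2\Psi_1|^2\,{\rm d}\bm x
\;\sim\;\int_{\theta}^{\operatorname{diam}\Omega} r^{d+\lambda}\,r^{-2d}\,r^{d-1}\,{\rm d}r
\;=\;\int_{\theta}^{\operatorname{diam}\Omega} r^{\lambda-1}\,{\rm d}r\;=\;\Theta(1),
\end{align*}
and the identical integrand appears for $\|\sigma_z^{\mu/2-1}D\Psi_1\|_{L^2}^2$. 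Since $\lambda>0$, these integrals are dominated by $r\sim\operatorname{diam}\Omega$, not $r\sim\theta$; for $z$ in the interior of $\Omega$ the quantity $\|\sigma_z^{\mu/2-1}D\Psi_1\|_{L^2}$ is therefore bounded \emph{below} by a positive $h$-independent constant (free-space kernel on, say, $1/4\le r\le 1/2$), so the companion weighted $H^1$ estimate you plan to use to absorb the commutators cannot produce $Ch^{\lambda/2}$, and your descending hierarchy does not close. Contrast this with $\Psi_2$: there the per-annulus contribution is $d_j^{\lambda-2}$, which sums to $\theta^{\lambda-2}$ --- near-field dominated --- which is exactly why your argument does deliver \eqref{regularity_psi_2}, and the dipole-type bound is the one the cited reference actually establishes. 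Your scheme would recover $h^{\lambda/2}$ for $\Psi_1$ only with far-field damping of the Green's function, i.e.\ when $\operatorname{dist}(z,\partial\Omega)=O(h)$, since then $|D^2G|\lesssim h\,r^{-d-1}$ and the dyadic sum is $h^2\theta^{\lambda-2}\sim h^{\lambda}$; but in the paper's application $z$ is the (generally interior) maximizing point. So before investing in the weighted $H^2$ identity you should verify exactly which statement \cite{Scott_JMPA_2005} proves and for which Green's function; as written, the weighted bound in \eqref{regularity_psi_1} is not reachable by your proposed argument for interior $z$, and your near-field-only bookkeeping is what hides this.
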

	%\begin{proof}
	%	The proof of the first inequality in \eqref{regularity_psi_1} can be found in \cite[]{}. 
	%	\begin{align*}
	%		\|\Psi_1\|_{H^2(\Omega)}&\le C\|\delta_{1,z}\|_{L^2(\Omega)}\le h^{-\frac{d}{2}}.
	%	\end{align*}
	%and
	%	\begin{align*}
	%		\|\sigma^{\frac{\mu}{2}}D^2\Psi_1\|_{L^2(\Omega)}&\le C  \left(\|\sigma^{\frac{\mu}{2}}\delta_{1,z}\|_{L^2(\Omega)}
	%		+\theta^{-1}\|\sigma^{\frac{\mu}{2}+1}\delta_{1,z}\|_{L^2(\Omega)}
	%		\right)\\
	%		&= \left(\|\sigma^{\frac{\mu}{2}}\delta_{1,z}\|_{L^2(B)}
	%		+\theta^{-1}\|\sigma^{\frac{\mu}{2}+1}\delta_{1,z}\|_{L^2(B)}
	%		\right)\\
	%		&\le C h^{\frac{\mu}{2}}h^{-\frac{d}{2}}\\
	%		&\le Ch^{\frac{\mu-d}{2}} .
	%	\end{align*}
	%\end{proof}

	We split the proof of \Cref{main_result_Linfty_norm} into four steps. First, we give standard $L^2$ estimates of the solutions  of \eqref{Green1} and  \eqref{Green2}. Second,  we shall obtain weighted $L^2$ norm approximations. Third, we prove the  $L^\infty$ norm stability of $\bm q_h$ and $u_h$. Finally, we obtain $L^\infty$ norm error estimates of $\bm q-\bm q_h$ and $u - u_h$. 
	
	\paragraph*{Step 1: $L^2$ norm error estimates for  the regularized Green's functions}																			
	Let $(\bm \Phi_{1,h}, \Psi_{1,h},\widehat \Psi_{1,h})$ and $(\bm \Phi_{2,h}, \Psi_{2,h}, \widehat \Psi_{2,h})$ be the HDG solution of \eqref{Green1} and \eqref{Green2}, respectively, i.e.,
	\begin{subequations}
		\begin{align}
			\mathscr B(\bm \Phi_{1,h}, \Psi_{1,h}, \widehat{\Psi}_{1,h}; \bm v_h, w_h,\widehat w_h) & = -(\delta_{1,z}, w_h)_{\mathcal T_h},\label{Green_HDG_1}\\
			\mathscr B(\bm \Phi_{2,h}, \Psi_{2,h}, \widehat{\Psi}_{2,h}; \bm v_h, w_h,\widehat w_h) & = ( \bm \delta_{2,z}, \bm v_h)_{\mathcal T_h}\label{Green_HDG_2}
		\end{align}
		for all $(\bm v_h, w_h, \widehat w_h)\in\bm{V}_h\times W_h\times \widehat W_h$. 
	\end{subequations}
	Through the paper, we use the following notation: 
	\begin{align*}
		\mathcal E_h^{\bm{\Phi}_i} = \bm \Pi_{k}^{o} \bm \Phi_i - \bm \Phi_{i,h}, \quad  \mathcal E_h^{\Psi_i} =  \Pi_{k+1}^{o}  \Psi_i -  \Psi_{i,h},  \quad \mathcal E_h^{\widehat\Psi_i} = \Pi_k^\partial   \Psi_i - \widehat  \Psi_{i,h}, i=1,2.
	\end{align*}
	
	Next, we list some preliminary results below, the proof can be found in \cite[Section 4.3]{HuShenSinglerZhangZheng_HDG_Dirichlet_control1}.
	\begin{subequations}
		\begin{align}
			\mathscr B(\bm \Pi_{k}^{o} \bm \Phi_1, \Pi_{k+1}^{o}  \Psi_1, \Pi_k^\partial   \Psi_1; \bm v_h, w_h,\widehat w_h) & = 
			\mathscr E(\bm{\Phi}_1,\Psi_1;\bm v_h,w_h,\widehat w_h)
			- (\delta_{1,z}, w_h)_{\mathcal T_h},\label{pro_error_equa_Phi}\\
			\mathscr B(\bm \Pi_{k}^{o} \bm \Phi_2, \Pi_{k+1}^{o}  \Psi_2, \Pi_k^\partial   \Psi_2; \bm v_h, w_h,\widehat w_h) & = \mathscr E(\bm{\Phi}_2,\Psi_2;\bm v_h,w_h,\widehat w_h)+(\bm \delta_{2,z}, \bm v_h)_{\mathcal T_h}, \label{pro_error_equa_Psi}\\
			\mathscr B(\mathcal E_h^{\bm{\Phi}_1},\mathcal E_h^{\Psi_1},\mathcal E_h^{\widehat\Psi_1}; \bm v_h, w_h,\widehat w_h) & =
			\mathscr E(\bm{\Phi}_1,\Psi_1;\bm v_h,w_h,\widehat w_h),\label{error_equa_Phi}\\
			\mathscr B(\mathcal E_h^{\bm{\Phi}_2},\mathcal E_h^{\Psi_2},\mathcal E_h^{\widehat\Psi_2}; \bm v_h, w_h,\widehat w_h) & = \mathscr E(\bm{\Phi}_2,\Psi_2;\bm v_h,w_h,\widehat w_h),\label{error_equa_Psi}
		\end{align}
	\end{subequations}
	where \begin{align*}
		\mathscr E(\bm{\Phi},\Psi;\bm v_h,w_h,\widehat w_h)  =\langle(\bm \Phi -\bm \Pi_{k}^{o} \bm\Phi )\cdot\bm n,w_h-\widehat{w}_h\rangle_{\partial\mathcal T_h} +\langle h_K^{-1} (\Psi -\Pi_{k+1}^o  \Psi   ),\Pi_k^\partial  w_h-\widehat w_h \rangle_{\partial\mathcal T_h}.
	\end{align*}
	Using \Cref{eq_B} and standard HDG error analysis, we have the following basic  $L^2$ estimates:
	\begin{subequations}
		\begin{align}
			\|\mathcal E_h^{\bm{\Phi}_1}\|_{\bm L^2(\Omega)}+\|h_K^{-\frac 1 2}(\Pi_k^{\partial}\mathcal E_h^{\Psi_1}-\mathcal E_h^{\widehat{\Psi}_1})\|_{\partial\mathcal T_h}&\le Ch^{1-\frac{d}{2}},
			&\|\mathcal E_h^{\Psi_1}\|_{L^2(\Omega)}\le  Ch^{2-\frac{d}{2}},\label{L2_error_Phi}\\
			\|\mathcal E_h^{\bm{\Phi}_2}\|_{\bm L^2(\Omega)}+\|h_K^{-\frac 1 2}(\Pi_k^{\partial}\mathcal E_h^{\Psi_2}-\mathcal E_h^{\widehat{\Psi}_2})\|_{\partial\mathcal T_h}&\le Ch^{-\frac{d}{2}},
			&\|\mathcal E_h^{\Psi_2}\|_{L^2(\Omega)}\le  Ch^{1-\frac{d}{2}},\label{L2_error_Psi}
		\end{align}
		and the following discrete  inequality  on each element $K\in \mathcal T_h$, $i=1,2$.
		\begin{align}\label{psi-phi}
			\|\nabla \mathcal E_h^{\Psi_i}\|_{\bm L^2(K)} + \|{h_K^{-\frac 1 2}}(\mathcal E_h^{\Psi_i}-\mathcal E_h^{\widehat{\Psi}_i})\|_{L^2(\partial K)}\le C(
			\|\mathcal E_h^{\bm{\Phi}_i}\|_{\bm L^2(K)}+\|{h_K^{-\frac 1 2}}(\Pi_k^{\partial}\mathcal E_h^{\Psi_i}-\mathcal E_h^{\widehat{\Psi}_i})\|_{L^2(\partial K)}).
		\end{align}
	\end{subequations}

	\paragraph*{Step 2: Weighted $L^2$ norm error estimates for  the regularized Green's functions}	
	First, we give the weighted $L^2$ norm  estimate for the stabilization term. Since estimate \eqref{psi-phi} holds on each element, we use the same technique as in \Cref{L2_noundness_sigma} to get
	\begin{align}\label{Stbilization_weighted}
		\begin{split}
			\|\sigma_z^{\frac{\mu}{2}}\nabla \mathcal E_h^{\Psi_i}\|_{\mathcal T_h}& +\|h_K^{-\frac 1 2}\sigma_z^{\frac{\mu}{2}}(\mathcal E_h^{\Psi_i}-\mathcal E_h^{\widehat{\Psi}_i})\|_{\partial \mathcal T_h}\\
			&\le C(
			\|\sigma_z^{\frac{\mu}{2}}\mathcal E_h^{\bm{\Phi}_i}\|_{\mathcal T_h}+\|h_K^{-\frac 1 2}\sigma_z^{\frac{\mu}{2}}(\Pi_k^{\partial}\mathcal E_h^{\Psi_i}-\mathcal E_h^{\widehat{\Psi}_i})\|_{\partial \mathcal T_h}).	
		\end{split}
	\end{align}

	\begin{lemma}\label{Ih-stability} 
		Let $\mathcal I_h$ be the operator which was defined in \Cref{Oswarld_interpolation}, then we have 
		\begin{subequations}
			\begin{align}
				\|\sigma_{z}^{\frac{\mu}{2}-1}\mathcal I_h\mathcal E_h^{\Psi_1}\|_{\mathcal T_h} &\le C\|\sigma_{z}^{\frac{\mu}{2}-1}\mathcal E_h^{\Psi_1}\|_{\mathcal T_h},\label{Ih-stability_a}\\
				\|\sigma_{z}^{\frac{\mu}{2}}\nabla\mathcal I_h\mathcal E_h^{\Psi_1}\|_{\mathcal T_h}&\le C(
				\|\sigma_z^{\frac{\mu}{2}}\mathcal E_h^{\bm{\Phi}_i}\|_{\mathcal T_h}+h^{-\frac 1 2}\|\sigma_z^{\frac{\mu}{2}}(\Pi_k^{\partial}\mathcal E_h^{\Psi_i}-\mathcal E_h^{\widehat{\Psi}_i})\|_{\partial \mathcal T_h}).\label{Ih-stability_b}
			\end{align}
		\end{subequations}
	\end{lemma}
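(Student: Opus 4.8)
The plan is to handle both bounds with the single splitting $\mathcal I_h\mathcal E_h^{\Psi_1} = \mathcal E_h^{\Psi_1} + (\mathcal I_h\mathcal E_h^{\Psi_1} - \mathcal E_h^{\Psi_1})$, bounding the correction through the weighted Oswald estimate \eqref{Oswarld_interpolation_b} and reducing everything to an interface jump of $\mathcal E_h^{\Psi_1}$; the two inequalities differ only in how that jump is absorbed. Throughout I take the index $i=1$ on the right-hand sides, the case $i=2$ being identical. Note first that $\mathcal E_h^{\Psi_1}=\Pi_{k+1}^o\Psi_1-\Psi_{1,h}$ is a piecewise polynomial in $W_h$, so both the weighted inverse inequality \eqref{weighed-inverse2} and \eqref{Oswarld_interpolation_b} apply to it.

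For \eqref{Ih-stability_a} I would use the triangle inequality followed by \eqref{Oswarld_interpolation_b} with $\alpha=\frac\mu2-1$ to get $\|\sigma_z^{\frac\mu2-1}\mathcal I_h\mathcal E_h^{\Psi_1}\|_{\mathcal T_h}\le \|\sigma_z^{\frac\mu2-1}\mathcal E_h^{\Psi_1}\|_{\mathcal T_h}+Ch^{\frac12}\|\sigma_z^{\frac\mu2-1}[\![\mathcal E_h^{\Psi_1}]\!]\|_{\mathcal E_h}$. The first summand is already the desired right-hand side. Since $\sigma_z$ is a globally continuous weight, the jump on an interior face is dominated by its two one-sided traces, giving $\|\sigma_z^{\frac\mu2-1}[\![\mathcal E_h^{\Psi_1}]\!]\|_{\mathcal E_h}\le C\|\sigma_z^{\frac\mu2-1}\mathcal E_h^{\Psi_1}\|_{\partial\mathcal T_h}$, and then the weighted inverse inequality \eqref{weighed-inverse2} converts this boundary norm into the interior norm at the price of $h^{-\frac12}$, which exactly cancels the $h^{\frac12}$ and establishes \eqref{Ih-stability_a}.

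For \eqref{Ih-stability_b} the same splitting and the gradient part of \eqref{Oswarld_interpolation_b} with $\alpha=\frac\mu2$ yield $\|\sigma_z^{\frac\mu2}\nabla\mathcal I_h\mathcal E_h^{\Psi_1}\|_{\mathcal T_h}\le \|\sigma_z^{\frac\mu2}\nabla\mathcal E_h^{\Psi_1}\|_{\mathcal T_h}+Ch^{-\frac12}\|\sigma_z^{\frac\mu2}[\![\mathcal E_h^{\Psi_1}]\!]\|_{\mathcal E_h}$, where the first term is controlled by the right-hand side of \eqref{Ih-stability_b} through the weighted stabilization estimate \eqref{Stbilization_weighted}. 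The crucial point, which is also the main obstacle, is that the crude route used for (a) would now leave a factor $h^{-1}$ and fail; instead I exploit that the trace error $\mathcal E_h^{\widehat\Psi_1}=\Pi_k^\partial\Psi_1-\widehat\Psi_{1,h}$ is single-valued on each interior face, because $\Psi_1\in H^2(\Omega)$ has a single-valued trace and $\widehat\Psi_{1,h}\in\widehat W_h$ is single-valued by definition. Consequently $[\![\mathcal E_h^{\Psi_1}]\!]=[\![\mathcal E_h^{\Psi_1}-\mathcal E_h^{\widehat\Psi_1}]\!]$, and bounding this jump by one-sided traces gives $\|\sigma_z^{\frac\mu2}[\![\mathcal E_h^{\Psi_1}]\!]\|_{\mathcal E_h}\le C\|\sigma_z^{\frac\mu2}(\mathcal E_h^{\Psi_1}-\mathcal E_h^{\widehat\Psi_1})\|_{\partial\mathcal T_h}$. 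The remaining quantity $h^{-\frac12}\|\sigma_z^{\frac\mu2}(\mathcal E_h^{\Psi_1}-\mathcal E_h^{\widehat\Psi_1})\|_{\partial\mathcal T_h}$ is, up to $h_K\approx h$, precisely the second term on the left of \eqref{Stbilization_weighted}, so \eqref{Stbilization_weighted} closes the bound. The delicate verifications are the single-valuedness of $\mathcal E_h^{\widehat\Psi_1}$ and the recognition that inserting it is exactly what replaces the lossy inverse estimate by the sharp stabilization control.
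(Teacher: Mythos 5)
Your proof is correct and follows the paper's argument essentially step for step: triangle inequality plus the weighted Oswald estimate \eqref{Oswarld_interpolation_b}, with the jump in \eqref{Ih-stability_a} absorbed via the weighted inverse inequality \eqref{weighed-inverse2}, and in \eqref{Ih-stability_b} the key identity $[\![\mathcal E_h^{\Psi_1}]\!]=[\![\mathcal E_h^{\Psi_1}-\mathcal E_h^{\widehat\Psi_1}]\!]$ followed by \eqref{Stbilization_weighted}, exactly as in the paper. The only detail worth adding is that on boundary faces this identity additionally requires $\mathcal E_h^{\widehat\Psi_1}=0$ there (true because $\Psi_1$ and $\widehat\Psi_{1,h}$ both vanish on $\partial\Omega$), a point the paper states explicitly alongside the interior single-valuedness you verified.
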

	\begin{proof}
		By the triangle inequality, \eqref{Oswarld_interpolation_b} and \eqref{weighed-inverse2}, we have
		\begin{align*}
			\|\sigma_{z}^{\frac{\mu}{2}-1}\mathcal I_h\mathcal E_h^{\Psi_1}\|_{\mathcal T_h}&\le 
			\|\sigma_{z}^{\frac{\mu}{2}-1}(\mathcal I_h\mathcal E_h^{\Psi_1}-\mathcal E_h^{\Psi_1})\|_{\mathcal T_h}
			+	\|\sigma_{z}^{\frac{\mu}{2}-1}\mathcal E_h^{\Psi_1}\|_{\mathcal T_h}\\
			&\le Ch^{\frac{1}{2}}\|\sigma_{z}^{\frac{\mu}{2}-1}[\![\mathcal E_h^{\Psi_1}]\!]\|_{\partial\mathcal T_h}+\|\sigma_{z}^{\frac{\mu}{2}-1}\mathcal E_h^{\Psi_1}\|_{\mathcal T_h}\\
			&\le C\|\sigma_{z}^{\frac{\mu}{2}-1}\mathcal E_h^{\Psi_1}\|_{\mathcal T_h},
		\end{align*}
		and this proves \eqref{Ih-stability_a}. Next, by the triangle inequality and \eqref{Oswarld_interpolation_b} we have
		\begin{align*}
			\|\sigma_{z}^{\frac{\mu}{2}}\nabla\mathcal I_h\mathcal E_h^{\Psi_1}\|_{\mathcal T_h}&\le 
			\|\sigma_{z}^{\frac{\mu}{2}}\nabla(\mathcal I_h\mathcal E_h^{\Psi_1}-\mathcal E_h^{\Psi_1})\|_{\mathcal T_h}
			+	\|\sigma_{z}^{\frac{\mu}{2}}\nabla\mathcal E_h^{\Psi_1}\|_{\mathcal T_h}\\
			&\le Ch^{-\frac 1 2}\|\sigma_{z}^{\frac{\mu}{2}}[\![\mathcal E_h^{\Psi_1}]\!]\|_{\mathcal T_h}+\|\sigma_{z}^{\frac{\mu}{2}}\nabla\mathcal E_h^{\Psi_1}\|_{\mathcal T_h}\\
			&=Ch^{-\frac 1 2}\|\sigma_{z}^{\frac{\mu}{2}}[\![\mathcal E_h^{\Psi_1}-\mathcal E_h^{\widehat{\Psi}_1}]\!]\|_{\partial\mathcal T_h}+\|\sigma_{z}^{\frac{\mu}{2}}\nabla\mathcal E_h^{\Psi_1}\|_{\mathcal T_h}.
		\end{align*}
		The last equality holds since $\mathcal E_h^{\widehat{\Psi}_1}$ is single valued on interior edges and $\mathcal E_h^{\widehat{\Psi}_1}=0$ on boundary edges. Next, we use \eqref{Stbilization_weighted} to get
		\begin{align*}	
			\|\sigma_{z}^{\frac{\mu}{2}}\nabla\mathcal I_h\mathcal E_h^{\Psi_1}\|_{\mathcal T_h}\le  C\left(
			\|\sigma_{z}^{\frac{\mu}{2}}\mathcal E_h^{\bm{\Phi}_1}\|_{\bm L^2(\mathcal T_h)}
			+\|{h_K^{-\frac 1 2}}\sigma_{z}^{\frac{\mu}{2}}(\Pi_k^{\partial}\mathcal E_h^{\Psi_1}-\mathcal E_h^{\widehat{\Psi}_1})\|_{\partial\mathcal T_h}\right).
		\end{align*}
	\end{proof}

	\begin{lemma}	
		If $\kappa$ is large enough, then we have:
		\begin{subequations}
			\begin{align}
				\|\sigma_{z}^{\frac{\mu}{2}-1}\mathcal E_h^{\Psi_1}\|_{\mathcal T_h} &\le  C \kappa^{-1}\left(\|\sigma_{z}^{\frac{\mu}{2}}\mathcal E_h^{\bm\Phi_1}\|_{\mathcal T_h}
				+ 	\|{ h_K^{-\frac 1 2}}\sigma_{z}^{\frac{\mu}{2}}(\Pi_k^{\partial}\mathcal E_h^{\Psi_1}-\mathcal E_h^{\widehat \Psi_1})\|_{\partial\mathcal T_h}+h^{1+\frac{\lambda}{2}}\right),\label{error_esti_Psi1}\\
				\|\sigma_{z}^{\frac{\mu}{2}-1}\mathcal E_h^{\Psi_2}\|_{\mathcal T_h}&\le   C\kappa^{-1}\left(\|\sigma_{z}^{\frac{\mu}{2}}\mathcal E_h^{\bm{\Phi}_2}\|_{\mathcal T_h}
				+\|{ h_K^{-\frac 1 2}}\sigma_{z}^{\frac{\mu}{2}}( \Pi_k^{\partial}\mathcal E_h^{\Psi_2}-\mathcal E_h^{\widehat{\Psi}_2})\|_{\partial\mathcal T_h}+h^{\frac \lambda 2}\right),\label{error_esti_Psi2}
			\end{align}	
			where $C$ is independent of $\kappa$ and $h$.
		\end{subequations}
	\end{lemma}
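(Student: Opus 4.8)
The natural route is an Aubin--Nitsche duality argument, in which the loss of one power in the weight (from $\sigma_z^{\frac\mu2}$ on the right to $\sigma_z^{\frac\mu2-1}$ on the left) is paid for by an extra power of $h$ extracted from the duality and converted into $\kappa^{-1}$ through the regularization $\theta=\kappa h$. \emph{First}, I would introduce, for $i=1$, the auxiliary problem $\bm\Theta+\nabla\phi=\bm 0$, $\nabla\cdot\bm\Theta=\sigma_z^{\mu-2}\mathcal E_h^{\Psi_1}$ in $\Omega$, $\phi=0$ on $\partial\Omega$, so that, writing $g=\sigma_z^{\mu-2}\mathcal E_h^{\Psi_1}$, we have $\|\sigma_z^{\frac\mu2-1}\mathcal E_h^{\Psi_1}\|_{\mathcal T_h}^2=(g,\mathcal E_h^{\Psi_1})_{\mathcal T_h}$. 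Since this dual problem has exactly the form of \eqref{Green1}, its HDG projection obeys the analogue of \eqref{pro_error_equa_Phi}. Testing that relation with $(\mathcal E_h^{\bm\Phi_1},\mathcal E_h^{\Psi_1},\mathcal E_h^{\widehat\Psi_1})$, using the symmetry \eqref{commute} of $\mathscr B$ and the primal error equation \eqref{error_equa_Phi}, I would arrive at an identity of the form
\[
\|\sigma_z^{\frac\mu2-1}\mathcal E_h^{\Psi_1}\|_{\mathcal T_h}^2=\mathscr E(\bm\Theta,\phi;\mathcal E_h^{\bm\Phi_1},\mathcal E_h^{\Psi_1},\mathcal E_h^{\widehat\Psi_1})-\mathscr E(\bm\Phi_1,\Psi_1;\bm \Pi_k^o\bm\Theta,\Pi_{k+1}^o\phi,\Pi_k^\partial\phi),
\]
in which the discrete solution has disappeared and only the consistency functional $\mathscr E$ evaluated at projection errors survives.

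\emph{Second}, in the first $\mathscr E$-term I would apply weighted Cauchy--Schwarz on $\partial\mathcal T_h$, splitting the weight as $\sigma_z^{-\frac\mu2}\cdot\sigma_z^{\frac\mu2}$ and inserting $h_K^{\pm 1/2}$. The factor built from $\mathcal E_h^{\Psi_1}-\mathcal E_h^{\widehat\Psi_1}$ and $\Pi_k^\partial\mathcal E_h^{\Psi_1}-\mathcal E_h^{\widehat\Psi_1}$ is controlled directly by the right-hand side of \eqref{error_esti_Psi1} through the weighted stabilization estimate \eqref{Stbilization_weighted}. The factor built from $\bm\Theta-\bm\Pi_k^o\bm\Theta$ and $\phi-\Pi_{k+1}^o\phi$ I would bound by the weighted approximation \eqref{L2_prijection_sigma}, producing one power of $h$ times a weighted second-order norm of $\phi$ (recall $\bm\Theta=-\nabla\phi$). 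Invoking weighted elliptic regularity for the dual problem to pass from $\|\sigma_z^{\bullet}D^2\phi\|$ back to a weighted norm of $g$, and using $\sigma_z\ge\theta=\kappa h$ from \eqref{def_sigma} to write $\sigma_z^{-1}\le(\kappa h)^{-1}$, the $h$ from approximation and the $\theta^{-1}$ combine into $h\cdot(\kappa h)^{-1}=\kappa^{-1}$, while the residual weighted norm of $g$ reproduces $\|\sigma_z^{\frac\mu2-1}\mathcal E_h^{\Psi_1}\|_{\mathcal T_h}$. This term is therefore bounded by $C\kappa^{-1}\|\sigma_z^{\frac\mu2-1}\mathcal E_h^{\Psi_1}\|_{\mathcal T_h}$ times the right-hand side of \eqref{error_esti_Psi1}.

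\emph{Third}, in the second $\mathscr E$-term the projection errors of the primal Green's function $(\bm\Phi_1,\Psi_1)$ appear; bounding them through \eqref{L2_prijection_sigma} and the primal regularity \eqref{regularity_psi_1} (which supplies $\|\sigma_z^{\frac\mu2}D^2\Psi_1\|\le Ch^{\frac\lambda2}$) yields the inhomogeneous contribution $Ch^{1+\frac\lambda2}$, while the accompanying dual projection errors again contribute a factor $\kappa^{-1}\|\sigma_z^{\frac\mu2-1}\mathcal E_h^{\Psi_1}\|_{\mathcal T_h}$ by the mechanism of the previous step. Collecting both terms gives $\|\sigma_z^{\frac\mu2-1}\mathcal E_h^{\Psi_1}\|_{\mathcal T_h}^2\le C\kappa^{-1}\|\sigma_z^{\frac\mu2-1}\mathcal E_h^{\Psi_1}\|_{\mathcal T_h}\,\big(\text{right-hand side of }\eqref{error_esti_Psi1}\big)$, and dividing by $\|\sigma_z^{\frac\mu2-1}\mathcal E_h^{\Psi_1}\|_{\mathcal T_h}$ proves \eqref{error_esti_Psi1}. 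Estimate \eqref{error_esti_Psi2} follows verbatim after replacing the primal problem and its regularity by \eqref{Green2} and \eqref{regularity_psi_2}; the extra factor $h^{-1}$ in \eqref{regularity_psi_2} is exactly what turns $h^{1+\frac\lambda2}$ into $h^{\frac\lambda2}$.

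\emph{The main obstacle} is not the duality bookkeeping itself but the weighted elliptic regularity for the dual problem together with the precise matching of weight exponents. One must verify that the negative-power weights $\sigma_z^{-\frac\mu2}$ (or a shifted variant) are admissible once the problem is regularized at scale $\theta=\kappa h$, so that the relevant weighted integrals are finite by \eqref{sigma_lemma_2_constant}, and that the exponents align so that \eqref{Stbilization_weighted} and \eqref{L2_prijection_sigma} apply with the weights actually produced by the Cauchy--Schwarz splitting. Obtaining exactly $\kappa^{-1}$, rather than a generic constant, hinges on confirming that the single surplus power of $h$ coming from the approximation of $\nabla\phi$ is the only $h$-power that remains after the weight is converted through $\sigma_z^{-1}\le\theta^{-1}$, which is also what makes the final absorption into the left-hand side legitimate for $\kappa$ large.
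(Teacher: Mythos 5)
Your duality skeleton coincides with the paper's treatment of its term $I_2$: the same dual problem, the same use of \eqref{pro_error_equa_Phi}, \eqref{commute} and \eqref{error_equa_Phi} to reduce everything to two $\mathscr E$-functionals evaluated at projection errors, the same conversion $h\cdot\theta^{-1}=\kappa^{-1}$, and the same use of \eqref{regularity_psi_1} to generate $h^{1+\frac{\lambda}{2}}$ (and \eqref{regularity_psi_2} to explain \eqref{error_esti_Psi2}). But there is a genuine gap exactly at the point you yourself flag as ``the main obstacle'': you take the dual data to be $g=\sigma_z^{\mu-2}\mathcal E_h^{\Psi_1}$. Since $\mathcal E_h^{\Psi_1}$ is a discontinuous piecewise polynomial, $g\notin H^1(\Omega)$, while the weighted regularity estimate the argument relies on --- in the paper, $\|\sigma_z^{-\frac{\mu}{2}}D^2\Psi_3\|_{L^2(\Omega)}\le C\theta^{-1}\|\sigma_z^{2-\frac{\mu}{2}}D(\text{data})\|_{L^2(\Omega)}$, from the regularity lemma cited from \cite{Brenner_FEMBook_2008} --- requires the data to lie in $H_0^1(\Omega)$, because it trades the weight $\sigma_z^{-\mu}$ for one derivative of the data at the cost of $\theta^{-1}$. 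This trade is unavoidable: with $\mu=d+\lambda>d$ the weight $\sigma_z^{-\mu}$ is not (uniformly in $\theta$) a Muckenhoupt $A_2$ weight, so no direct weighted $L^2\to H^2$ bound of the form $\|\sigma_z^{-\frac{\mu}{2}}D^2\Psi_3\|\lesssim\|\sigma_z^{-\frac{\mu}{2}}g\|$ is available. With your $g$ the right-hand side of the derivative-shifted bound is not even a finite $L^2$ quantity (the distributional gradient of $g$ carries jump measures on interelement faces), so the step ``invoking weighted elliptic regularity for the dual problem to pass back to a weighted norm of $g$'' fails as stated.

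The paper's fix --- the actual nontrivial content of the proof --- is to insert the Oswald interpolant of \Cref{Oswarld_interpolation} \emph{before} dualizing: split $\|\sigma_z^{\frac{\mu}{2}-1}\mathcal E_h^{\Psi_1}\|_{\mathcal T_h}^2=(\sigma_z^{\mu-2}\mathcal E_h^{\Psi_1},\mathcal E_h^{\Psi_1}-\mathcal I_h\mathcal E_h^{\Psi_1})_{\mathcal T_h}+(\sigma_z^{\mu-2}\mathcal E_h^{\Psi_1},\mathcal I_h\mathcal E_h^{\Psi_1})_{\mathcal T_h}$, run your duality argument only on the second term with the \emph{conforming} dual data $\sigma_z^{\mu-2}\mathcal I_h\mathcal E_h^{\Psi_1}\in H_0^1(\Omega)$, and bound the nonconforming remainder directly by the jump estimate \eqref{Oswarld_interpolation_b} combined with \eqref{Stbilization_weighted} and $\sigma_z\ge\kappa h$ (this remainder contributes its own $\kappa^{-1}$ and is absorbed by Young's inequality). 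Making the regularity output usable then requires the weighted stability bounds for $\mathcal I_h$ in \Cref{Ih-stability}, which convert $\|\sigma_z^{2-\frac{\mu}{2}}D(\sigma_z^{\mu-2}\mathcal I_h\mathcal E_h^{\Psi_1})\|$ into the target quantities $\|\sigma_z^{\frac{\mu}{2}-1}\mathcal E_h^{\Psi_1}\|_{\mathcal T_h}$, $\|\sigma_z^{\frac{\mu}{2}}\mathcal E_h^{\bm\Phi_1}\|_{\mathcal T_h}$ and the weighted stabilization term; your outline has no substitute for either ingredient. The rest of your plan (the two-term $\mathscr E$ identity, the bounds corresponding to the paper's $I_{21}$ and $I_{22}$, the final absorption after dividing out) matches the paper, but the conforming regularization of the dual data is not bookkeeping --- it is the step on which the lemma stands.
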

	\begin{proof}
		We use the Oswald operator $\mathcal I_h$ \eqref{Oswarld_interpolation} to split the following term into two terms:
		\begin{align*}
			\|\sigma_{z}^{\frac{\mu}{2}-1}\mathcal E_h^{\Psi_1}\|_{\mathcal T_h}^2=(\sigma_{z}^{\mu-2}\mathcal E_h^{\Psi_1},\mathcal E_h^{\Psi_1}-\mathcal I_h\mathcal E_h^{\Psi_1})_{\mathcal T_h}+(\sigma_{z}^{\mu-2}\mathcal E_h^{\Psi_1},\mathcal I_h\mathcal E_h^{\Psi_1})_{\mathcal T_h}:= I_1 + I_2.
		\end{align*}
		Next, we estimate the above two terms. First, by \eqref{Oswarld_interpolation_b} we have 
		\begin{align*}
			|I_1|  &\le Ch^{\frac 1 2}\|\sigma_{z}^{\frac{\mu}{2}-1}(\mathcal E_h^{\Psi_1}-\mathcal E_h^{\widehat \Psi_1})\|_{\partial\mathcal T_h}\|\sigma_{z}^{\frac{\mu}{2}-1}\mathcal E_h^{\Psi_1}\|_{\mathcal T_h}\\
			&\le C\kappa^{-1}h^{-\frac 1 2}\|\sigma_{z}^{\frac{\mu}{2}}(\mathcal E_h^{\Psi_1}-\mathcal E_h^{\widehat \Psi_1})\|_{\partial\mathcal T_h}\|\sigma_{z}^{\frac{\mu}{2}-1}\mathcal E_h^{\Psi_1}\|_{\mathcal T_h}.
		\end{align*}
		Here we used the fact that $\sigma_z\ge \kappa h$ (see \eqref{def_sigma}) and \eqref{pro_sigma_1}. By \eqref{Stbilization_weighted} we have
		\begin{align*}
			|I_1| &\le C\kappa^{-1}\left(\|\sigma_{z}^{\frac{\mu}{2}}\mathcal E_h^{\bm{\Phi}_1}\|_{\mathcal T_h}+h^{-\frac 1 2}\|\sigma_{z}^{\frac{\mu}{2}}(\Pi_k^{\partial}\mathcal E_h^{\Psi_1}-\mathcal E_h^{\widehat\Psi_1})\|_{\partial\mathcal T_h}\right)\|\sigma_{z}^{\frac{\mu}{2}-1}\mathcal E_h^{\Psi_1}\|_{\mathcal T_h}\\
			&\le \frac 1 4 \|\sigma_{z}^{\frac{\mu}{2}-1}\mathcal E_h^{\Psi_1}\|_{\mathcal T_h}^2 + C\kappa^{-2}\left(\|\sigma_{z}^{\frac{\mu}{2}}\mathcal E_h^{\bm{\Phi}_1}\|_{\mathcal T_h}^2+\|{ h_K^{-\frac 1 2}}\sigma_{z}^{\frac{\mu}{2}}(\Pi_k^{\partial}\mathcal E_h^{\Psi_1}-\mathcal E_h^{\widehat\Psi_1})\|_{\partial\mathcal T_h}^2\right).
		\end{align*}
		
		Next, for the term $I_2$,  by \eqref{pro_error_equa_Phi}, \eqref{commute} and \eqref{error_equa_Phi} we have 
		\begin{align*}
			I_2
			&=-\mathscr B(\bm\Pi_{k}^{o}\bm \Phi_{3},\Pi_{k+1}^{o}\Psi_3,\Pi_k^{\partial}\Psi_3;
			\mathcal E_h^{\bm{\Phi}_1},\mathcal E_h^{\Psi_1},\mathcal E_h^{\widehat{\Psi}_1})
			+\mathscr E(\bm\Phi_3,\Psi_3;	\mathcal E_h^{\bm{\Phi}_1},\mathcal E_h^{\Psi_1},\mathcal E_h^{\widehat{\Psi}_1})\\
			&=-\mathscr B(
			\mathcal E_h^{\bm{\Phi}_1},\mathcal E_h^{\Psi_1},\mathcal E_h^{\widehat{\Psi}_1};
			\bm\Pi_{k}^{o}\bm \Phi_{3},\Pi_{k+1}^{o}\Psi_3,\Pi_k^{\partial}\Psi_3)+\mathscr E(\bm\Phi_3,\Psi_3;	\mathcal E_h^{\bm{\Phi}_1},\mathcal E_h^{\Psi_1},\mathcal E_h^{\widehat{\Psi}_1})\\
			&=-\mathscr E(\bm\Phi_1,\Psi_1;	\bm\Pi_{k}^{o}\bm \Phi_{3},\Pi_{k+1}^{o}\Psi_3,\Pi_k^{\partial}\Psi_3) +\mathscr E(\bm\Phi_3,\Psi_3;	\mathcal E_h^{\bm{\Phi}_1},\mathcal E_h^{\Psi_1},\mathcal E_h^{\widehat{\Psi}_1})\\
			&=:I_{21}+I_{22}.
		\end{align*}
		For the term $I_{21}$ we have 
		\begin{align*}
			I_{21} &= -\mathscr E(\bm\Phi_1,\Psi_1;	\bm\Pi_{k}^{o}\bm \Phi_{3},\Pi_{k+1}^{o}\Psi_3,\Pi_k^{\partial}\Psi_3)\\
			& =\langle(\bm \Phi_1 -\bm \Pi_{k}^{o} \bm\Phi_1)\cdot\bm n,\Pi_{k+1}^{o}\Psi_3-\Pi_k^{\partial}\Psi_3\rangle_{\partial\mathcal T_h} +\langle h_K^{-1} (\Psi_1 -\Pi_{k+1}^o  \Psi_1   ),\Pi_k^\partial  (\Pi_{k+1}^o  \Psi_3 - \Psi_3) \rangle_{\partial\mathcal T_h}\\
			& = \langle(\bm \Phi_1 -\bm \Pi_{k}^{o} \bm\Phi_1)\cdot\bm n,\Pi_{k+1}^{o}\Psi_3-\Psi_3\rangle_{\partial\mathcal T_h} +\langle h_K^{-1} (\Psi_1 -\Pi_{k+1}^o  \Psi_1   ),\Pi_k^\partial  (\Pi_{k+1}^o  \Psi_3 - \Psi_3) \rangle_{\partial\mathcal T_h}.
		\end{align*}
		The last equality holds since $\langle \bm \Phi_1 \cdot\bm n, \Psi_3\rangle_{\partial\mathcal T_h} = 0 = \langle \bm \Phi_1 \cdot\bm n, \Pi_k^\partial\Psi_3\rangle_{\partial\mathcal T_h}$. Hence,
		\begin{align*}
			|I_{21}| &\le  |\langle \sigma_{z}^{\frac{\mu}{2}} (\bm \Phi_1 -\bm \Pi_{k}^{o} \bm\Phi_1)\cdot\bm n,\sigma_{z}^{-\frac{\mu}{2}}(\Pi_{k+1}^{o}\Psi_3-\Psi_3)\rangle_{\partial\mathcal T_h}|\\
			&\quad  +|\langle h_K^{-1} \sigma_{z}^{\frac{\mu}{2}} (\Psi_1 -\Pi_{k+1}^o  \Psi_1   ), \sigma_{z}^{\frac{\mu}{2}}\Pi_k^\partial  (\Pi_{k+1}^o  \Psi_3 - \Psi_3) \rangle_{\partial\mathcal T_h}|\\
			&\le Ch^2\|\sigma_{z}^{\frac{\mu}{2}}D^2\Psi_1\|_{L^2(\Omega)}\|\sigma_{z}^{-\frac{\mu}{2}}D^2\Psi_3\|_{L^2(\Omega)}\le Ch^{2+\frac{\lambda}{2}}\|\sigma_{z}^{-\frac{\mu}{2}}D^2\Psi_3\|_{L^2(\Omega)}.
		\end{align*}
		Similarly, for the term $I_{22}$ we have 
		\begin{align*}
			|I_{22}|\le Ch\|\sigma_{z}^{-\frac{\mu}{2}}D^2\Psi_3\|_{L^2(\Omega)}\left(\|\sigma_{z}^{\frac{\mu}{2}}\mathcal E_h^{\bm{\Phi}_1}\|_{\mathcal T_h}
			+	\|h_K^{-\frac 1 2}\sigma_{z}^{\frac{\mu}{2}}(\Pi_k^{\partial}\mathcal E_h^{\Psi_1}-\mathcal E_h^{\widehat\Psi_1})\|_{\partial\mathcal T_h}\right).
		\end{align*}
		This gives
		\begin{align}\label{proof2}
			|I_2|\le Ch\|\sigma_{z}^{-\frac{\mu}{2}}D^2\Psi_3\|_{L^2(\Omega)}\left(\|\sigma_{z}^{\frac{\mu}{2}}\mathcal E_h^{\bm{\Phi}_1}\|_{\mathcal T_h}
			+	\|h_K^{-\frac 1 2}\sigma_{z}^{\frac{\mu}{2}}(\Pi_k^{\partial}\mathcal E_h^{\Psi_1}-\mathcal E_h^{\widehat\Psi_1})\|_{\partial\mathcal T_h}+h^{1+\frac{\lambda}{2}}\right).
		\end{align}
		To estimate $\|\sigma_{z}^{-\frac{\mu}{2}}D^2\Psi_3\|_{L^2(\Omega)}$, we consider the dual problem: find $(\bm\Phi_3,\Psi_3)$ such that
		\begin{align*}
			\bm \Phi_3+\nabla \Psi_3= 0  \;\;\;\text{in}\ \Omega,\quad 
			\nabla\cdot\bm  \Phi_3= \sigma_{z}^{\mu-2}\mathcal I_h\mathcal E_h^{\Psi_1} \;\;\;\text{in}\ \Omega,\quad 
			\Psi_3 = 0                                           \;\;\;\text{on}\ \partial\Omega.
		\end{align*}
		Since $\sigma_{z}^{\mu-2}\mathcal I_h\mathcal E_h^{\Psi_1}\in H_0^1(\Omega)$,  by the regularity in \cite[Lemma 8.3.7]{Brenner_FEMBook_2008} and the estimates in \Cref{Ih-stability} we have 
		\begin{align}\label{dual_proof_1}
			\begin{split}
				\hspace{1em}&\hspace{-1em}\|\sigma_{z}^{-\frac{\mu}{2}}D^2\Psi_3\|_{\bm L^2(\Omega)}\le C\theta^{-1}\|\sigma_{z}^{2-\frac{\mu}{2}}D(\sigma_{z}^{\mu-2}\mathcal I_h\mathcal E_h^{\Psi_1})\|_{\bm L^2(\Omega)}\\
				&\le C\theta^{-1}\left(\|\sigma_{z}^{\frac{\mu}{2}-1}\nabla \sigma_z ~\mathcal I_h\mathcal E_h^{\Psi_1}\|_{\mathcal T_h}
				+	\|\sigma_{z}^{\frac{\mu}{2}}\nabla \mathcal I_h\mathcal E_h^{\Psi_1}\|_{\mathcal T_h}	\right)\\
				&\le C\theta^{-1}\left(\|\sigma_{z}^{\frac{\mu}{2}-1}\mathcal E_h^{\Psi_1}\|_{\mathcal T_h}+
				\|\sigma_{z}^{\frac{\mu}{2}}\mathcal E_h^{\bm\Phi_1}\|_{\mathcal T_h}+ h^{-\frac 1 2}\|\sigma_{z}^{\frac{\mu}{2}}(\Pi_k^{\partial}\mathcal E_h^{\Psi_1}-\mathcal E_h^{\widehat \Psi_1})\|_{\partial\mathcal T_h}\right).
			\end{split}
		\end{align}
		Substituting \eqref{dual_proof_1} into \eqref{proof2} we have 
		\begin{align*}
			|I_2|&\le Ch\theta^{-1}\left(\|\sigma_{z}^{\frac{\mu}{2}-1}\mathcal E_h^{\Psi_1}\|_{\mathcal T_h}+\|\sigma_{z}^{\frac{\mu}{2}}\mathcal E_h^{\bm\Phi_1}\|_{\mathcal T_h}
			+h^{-\frac 1 2}\|\sigma_{z}^{\frac{\mu}{2}}(\Pi_k^{\partial}\mathcal E_h^{\Psi_1}-\mathcal E_h^{\widehat \Psi_1})\|_{\partial\mathcal T_h}+h^{1+\frac{\lambda}{2}}\right)\\
			&\qquad\qquad \times \left(\|\sigma_{z}^{\frac{\mu}{2}}\mathcal E_h^{\bm{\Phi}_1}\|_{\mathcal T_h}+	\|\sigma_{z}^{\frac{\mu}{2}}(\Pi_k^{\partial}\mathcal E_h^{\Psi_1}-\mathcal E_h^{\widehat\Psi_1})\|_{\partial\mathcal T_h}+h^{1+\frac{\lambda}{2}}\right)\\
			&\le C\kappa^{-2}\left(\|\sigma_{z}^{\frac{\mu}{2}-1}\mathcal E_h^{\Psi_1}\|_{\mathcal T_h}^2+\|\sigma_{z}^{\frac{\mu}{2}}\mathcal E_h^{\bm\Phi_1}\|_{\mathcal T_h}^2
			+h^{-1}\|\sigma_{z}^{\frac{\mu}{2}}(\Pi_k^{\partial}\mathcal E_h^{\Psi_1}-\mathcal E_h^{\widehat \Psi_1})\|_{\partial\mathcal T_h}^2+h^{2+\lambda}\right).
		\end{align*}
		Then the desired result \eqref{error_esti_Psi1} follows  by summing $I_1$ and $I_2$, and taking $\kappa$ large enough. The proof of \eqref{error_esti_Psi2} is similar to the proof of \eqref{error_esti_Psi1}.
		
	\end{proof}

	\begin{lemma}\label{weight_err_thm}
		If $\kappa$ is large enough, then we have:
		\begin{subequations}
			\begin{align}
				\|\sigma_{z}^{\frac \mu 2}\mathcal E_h^{\bm\Phi_1}\|_{\mathcal T_h}+\|h_K^{-\frac 1 2}\sigma_{z}^{\frac \mu 2} (\Pi_k^{\partial}\mathcal E_h^{\Psi_1}- \mathcal E_h^{\widehat \Psi_1})\|_{L^2(\partial\mathcal T_h)} +\|h_K^{-\frac 1 2}\sigma_{z}^{\frac \mu 2} (\mathcal E_h^{\Psi_1}- \mathcal E_h^{\widehat \Psi_1})\|_{L^2(\partial\mathcal T_h)}
				\le  Ch^{1+\frac \lambda 2},\label{weight_err_1}\\
				\|\sigma_{z}^{\frac \mu 2}\mathcal E_h^{\bm\Phi_2}\|_{\mathcal T_h}+\|h_K^{-\frac 1 2}\sigma_{z}^{\frac \mu 2} (\Pi_k^{\partial}\mathcal E_h^{\Psi_2}- \mathcal E_h^{\widehat \Psi_2})\|_{L^2(\partial\mathcal T_h)} +\|h_K^{-\frac 1 2}\sigma_{z}^{\frac \mu 2} (\mathcal E_h^{\Psi_2}- \mathcal E_h^{\widehat \Psi_2})\|_{L^2(\partial\mathcal T_h)}
				\le  Ch^{\frac \lambda 2}.\label{weight_err_2}
			\end{align}	
		\end{subequations}
		
	\end{lemma}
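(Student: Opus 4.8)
My plan is to reduce \eqref{weight_err_1}--\eqref{weight_err_2} to a weighted energy estimate for the single quantity
\[
A_i := \|\sigma_z^{\frac{\mu}{2}}\mathcal E_h^{\bm\Phi_i}\|_{\mathcal T_h} + \|h_K^{-\frac12}\sigma_z^{\frac{\mu}{2}}(\Pi_k^{\partial}\mathcal E_h^{\Psi_i}-\mathcal E_h^{\widehat\Psi_i})\|_{\partial\mathcal T_h},
\]
because the third summand $\|h_K^{-\frac12}\sigma_z^{\frac{\mu}{2}}(\mathcal E_h^{\Psi_i}-\mathcal E_h^{\widehat\Psi_i})\|_{\partial\mathcal T_h}$ in each inequality is already bounded by $A_i$ through the stabilization estimate \eqref{Stbilization_weighted}. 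Thus it suffices to prove $A_1\le Ch^{1+\frac{\lambda}{2}}$ and $A_2\le Ch^{\frac{\lambda}{2}}$, and the rest of the statement follows.

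To extract $A_i^2$, I would first pair the bilinear form against the (non-discrete) weighted error, i.e. evaluate $\mathscr B(\mathcal E_h^{\bm\Phi_i},\mathcal E_h^{\Psi_i},\mathcal E_h^{\widehat\Psi_i};\,\sigma_z^\mu\mathcal E_h^{\bm\Phi_i},-\sigma_z^\mu\mathcal E_h^{\Psi_i},-\sigma_z^\mu\mathcal E_h^{\widehat\Psi_i})$. In this pairing the volume term coming from $(u,\nabla\cdot\bm v)$ cancels the one from $(\nabla\cdot\bm q,w)$, and the two numerical-trace pairings cancel against each other, so the surviving contributions are exactly $A_i^2$, the single sign-indefinite term $-(\mathcal E_h^{\Psi_i},\nabla\sigma_z^\mu\cdot\mathcal E_h^{\bm\Phi_i})_{\mathcal T_h}$ arising from differentiating the weight, and the stabilization commutator $\langle h_K^{-1}(\Pi_k^\partial\mathcal E_h^{\Psi_i}-\mathcal E_h^{\widehat\Psi_i}),\,\Pi_k^\partial(\sigma_z^\mu\mathcal E_h^{\Psi_i})-\sigma_z^\mu\Pi_k^\partial\mathcal E_h^{\Psi_i}\rangle_{\partial\mathcal T_h}$.

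Since the weighted error is not discrete, I would then split the pairing as $\mathscr B(\text{error};\text{projected test})+\mathscr B(\text{error};\text{weighting/projection mismatch})$, where the discrete test is $\bm v_h=\bm\Pi_k^o(\sigma_z^\mu\mathcal E_h^{\bm\Phi_i})$, $w_h=-\Pi_{k+1}^o(\sigma_z^\mu\mathcal E_h^{\Psi_i})$, $\widehat w_h=-\Pi_k^\partial(\sigma_z^\mu\mathcal E_h^{\widehat\Psi_i})$ (single-valued and vanishing on $\partial\Omega$ because $\mathcal E_h^{\widehat\Psi_i}$ does, hence admissible). To the projected part I apply the error equations \eqref{error_equa_Phi}--\eqref{error_equa_Psi}, turning it into the consistency term $\mathscr E(\bm\Phi_i,\Psi_i;\bm v_h,w_h,\widehat w_h)$; inserting $\sigma_z^{\pm\mu/2}$ into each duality pairing, applying Cauchy--Schwarz, and bounding the projection residuals $\bm\Phi_i-\bm\Pi_k^o\bm\Phi_i$ and $\Psi_i-\Pi_{k+1}^o\Psi_i$ by the \emph{lowest-order} weighted approximation in \eqref{L2_prijection_sigma}/\eqref{Pro_jec_2} (so that only $\|\sigma_z^{\mu/2}D^2\Psi_i\|$ is needed, via $\bm\Phi_i=-\nabla\Psi_i$), while the test-function factors reduce through \eqref{Stbilization_weighted} to $A_i$ and to $Ch^{1/2}A_i$, yields $\mathscr E\le Ch^{1+\frac{\lambda}{2}}A_1$ (resp. $Ch^{\frac{\lambda}{2}}A_2$) after substituting the regularity bounds $\|\sigma_z^{\mu/2}D^2\Psi_1\|\le Ch^{\lambda/2}$, $\|\sigma_z^{\mu/2}D^2\Psi_2\|\le Ch^{\lambda/2-1}$ from \Cref{regularity_Green_function}. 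The mismatch part, the weight-gradient term, and the stabilization commutator are all controlled by $\|\sigma_z^{\frac{\mu}{2}-1}\mathcal E_h^{\Psi_i}\|_{\mathcal T_h}$ (using $|\nabla\sigma_z^\mu|\le C\sigma_z^{\mu-1}$ from \eqref{pro_sigma_2} and the superconvergence \eqref{weight-app}), and here the crucial point is that \eqref{error_esti_Psi1}--\eqref{error_esti_Psi2} bound $\|\sigma_z^{\frac{\mu}{2}-1}\mathcal E_h^{\Psi_i}\|_{\mathcal T_h}$ by $C\kappa^{-1}(A_i+h^{1+\lambda/2})$ (resp. $C\kappa^{-1}(A_i+h^{\lambda/2})$), so that taking $\kappa$ large makes these contributions absorbable. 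Collecting everything gives $A_i^2\le \tfrac12 A_i^2+C\kappa^{-1}A_i^2+Ch^{2+\lambda}$ (resp. $+Ch^{\lambda}$), and Young's inequality together with $\kappa$ large then yields the claimed bounds; \eqref{Stbilization_weighted} finally supplies the third term.

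I expect the principal obstacle to be the bookkeeping of the commutators between the weight $\sigma_z^\mu$ and the two $L^2$ projections, most acutely in the stabilization pairing, where $\Pi_{k+1}^o$, $\Pi_k^\partial$ and the face trace of $\mathcal E_h^{\widehat\Psi_i}$ interact: one must verify that every such mismatch is either of order $h\|\sigma_z^{\frac{\mu}{2}-1}\mathcal E_h^{\Psi_i}\|_{\mathcal T_h}$ (hence absorbed via the $\kappa^{-1}$ gain of \eqref{error_esti_Psi1}--\eqref{error_esti_Psi2}) or of strictly higher order in $h$. The secondary difficulty is respecting the regularity budget: because only the weighted second-order bounds of \Cref{regularity_Green_function} are available, the consistency term $\mathscr E$ must be treated with first-order approximation while still recovering the sharp powers $h^{1+\frac{\lambda}{2}}$ and $h^{\frac{\lambda}{2}}$.
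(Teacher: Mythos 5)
Your proposal is correct and follows essentially the same route as the paper's proof: you test $\mathscr B$ with $(\sigma_z^{\mu}\mathcal E_h^{\bm\Phi_i},-\sigma_z^{\mu}\mathcal E_h^{\Psi_i},-\sigma_z^{\mu}\mathcal E_h^{\widehat\Psi_i})$, split off the projected discrete test to invoke the error equations and the consistency form $\mathscr E$, absorb the weight-gradient and projection-commutator terms via \eqref{error_esti_Psi1}--\eqref{error_esti_Psi2} with $\kappa$ large, and recover the third summand from \eqref{Stbilization_weighted} --- exactly the paper's decomposition into $I_1,\dots,I_5$. The only cosmetic caveat is that $\bm\Phi_2=\bm\delta_{2,z}-\nabla\Psi_2$ rather than $-\nabla\Psi_2$, but the extra $\bm\delta_{2,z}$ contribution has the same order and the paper itself leaves the $i=2$ case as ``similar.''
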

	\begin{proof}
		On one hand, by the definition of $\mathscr B$ in \eqref{def_B}  we have:
		\begin{align*}
			\hspace{0.1em}&\hspace{-0.1em}\mathscr B(\mathcal E_h^{\bm \Phi_1}, \mathcal E_h^{\Psi_1}, \mathcal E_h^{\widehat \Psi_1}; \sigma_{z}^{\mu}\mathcal E_h^{\bm \Phi_1}, -\sigma_{z}^{\mu}\mathcal E_h^{\Psi_1}, -\sigma_{z}^{\mu}\mathcal E_h^{\widehat \Psi_1})\\
			&=(\mathcal E_h^{\bm  \Phi_1},  \sigma_{z}^{\mu}\mathcal E_h^{\bm  \Phi_1})_{{\mathcal{T}_h}}- ( \mathcal E_h^{\Psi_1}, \nabla\cdot(\sigma_{z}^{\mu}\mathcal E_h^{\bm \Phi_1}))_{{\mathcal{T}_h}}+\langle \mathcal E_h^{\widehat \Psi_1}, \sigma_{z}^{\mu}\mathcal E_h^{\bm \Phi_1}\cdot \bm{n} \rangle_{\partial{{\mathcal{T}_h}}}+ (\nabla\cdot\mathcal E_h^{\bm  \Phi_1},   \sigma_{z}^{\mu}\mathcal E_h^{\Psi_1})_{{\mathcal{T}_h}}\\
			&\;\;  - \langle \mathcal E_h^{\bm \Phi} \cdot\bm n,\sigma_{z}^{\mu}\mathcal E_h^{\widehat \Psi_1} \rangle_{\partial \mathcal T_h} +\langle { h_K^{-1}} (\Pi_k^{\partial}\mathcal E_h^{\Psi_1}- \mathcal E_h^{\widehat \Psi_1}),  \Pi_k^{\partial} (\sigma_{z}^{\mu}\mathcal E_h^{\Psi_1})-\sigma_{z}^{\mu}\mathcal E_h^{\widehat \Psi_1} \rangle_{\partial{{\mathcal{T}_h}}}\\
			&=(\mathcal E_h^{\bm \Phi_1},  \sigma_{z}^{\mu}\mathcal E_h^{\bm  \Phi_1})_{{\mathcal{T}_h}}- ( \mathcal E_h^{\Psi_1}, \sigma_{z}^{\mu} \nabla\cdot \mathcal E_h^{\bm  \Phi_1})_{{\mathcal{T}_h}} - ( \mathcal E_h^{\Psi_1}, \mu\sigma_{z}^{\mu-1} \nabla\sigma_z \cdot \mathcal E_h^{\bm  \Phi_1}  )_{{\mathcal{T}_h}} +\langle \mathcal E_h^{\widehat \Psi_1}, \sigma_{z}^{\mu}\mathcal E_h^{\bm \Phi_1}\cdot \bm{n} \rangle_{\partial{{\mathcal{T}_h}}}\\
			&\; + (\nabla\cdot\mathcal E_h^{\bm  \Phi_1},   \sigma_{z}^{\mu}\mathcal E_h^{\Psi_1})_{{\mathcal{T}_h}} - \langle \mathcal E_h^{\bm  \Phi_1} \cdot\bm n,\sigma_{z}^{\mu}\mathcal E_h^{\widehat \Psi_1} \rangle_{\partial \mathcal T_h}+\langle { h_K^{-1}} (\Pi_k^{\partial}\mathcal E_h^{\Psi_1}- \mathcal E_h^{\widehat \Psi_1}),  \Pi_k^{\partial} (\sigma_{z}^{\mu}\mathcal E_h^{\Psi_1})-\sigma_{z}^{\mu}\mathcal E_h^{\widehat \Psi_1} \rangle_{\partial{{\mathcal{T}_h}}}.
		\end{align*}
		This gives
		\begin{align}\label{equation_1}
			\begin{split}
				\hspace{1em}&\hspace{-1em} \mathscr B(\mathcal E_h^{\bm \Phi_1}, \mathcal E_h^{\Psi_1}, \mathcal E_h^{\widehat \Psi_1}; \sigma_{z}^{\mu}\mathcal E_h^{\bm \Phi_1}, -\sigma_{z}^{\mu}\mathcal E_h^{\Psi_1}, -\sigma_{z}^{\mu}\mathcal E_h^{\widehat \Psi_1})\\
				& =  (\mathcal E_h^{\bm\Phi_1},   \sigma_{z}^{\mu}\mathcal E_h^{\bm\Phi_1})_{{\mathcal{T}_h}}-  ( \mathcal E_h^{\Psi_1}, \mu\sigma_{z}^{\mu-1} \nabla\sigma_{z}\cdot\mathcal E_h^{\bm  \Phi_1})_{{\mathcal{T}_h}} + \|{ h_K^{-\frac 1 2}}\sigma_{z}^{\frac \mu 2}  (\Pi_k^{\partial}\mathcal E_h^{\Psi_1}- \mathcal E_h^{\widehat \Psi_1})\|_{L^2(\partial\mathcal T_h)}^2 \\
				&\quad +\langle { h_K^{-1}} (\Pi_k^{\partial}\mathcal E_h^{\Psi_1}- \mathcal E_h^{\widehat \Psi_1}),  \Pi_k^{\partial} (\sigma_{z}^{\mu}\mathcal E_h^{\Psi_1}) -\sigma_{z}^{\mu} \Pi_k^{\partial}\mathcal E_h^{\Psi_1}\rangle_{\partial{{\mathcal{T}_h}}}.
			\end{split}
		\end{align}
		
		On the other hand, by the error equation \eqref{error_equa_Phi} we get 
		\begin{align*}
			\hspace{0.1em}&\hspace{-0.1em}\mathscr B(\mathcal E_h^{\bm \Phi_1}, \mathcal E_h^{\Psi_1}, \mathcal E_h^{\widehat \Psi_1}; \sigma_{z}^{\mu}\mathcal E_h^{\bm \Phi_1}, -\sigma_{z}^{\mu}\mathcal E_h^{\Psi_1}, -\sigma_{z}^{\mu}\mathcal E_h^{\widehat \Psi_1})\\
			& = \mathscr B(\mathcal E_h^{\bm \Phi_1}, \mathcal E_h^{\Psi_1}, \mathcal E_h^{\widehat \Psi_1}; \sigma_{z}^{\mu}\mathcal E_h^{\bm  \Phi_1} - \bm \Pi_k^o (\sigma_{z}^{\mu}\mathcal E_h^{\bm \Phi_1}),  -\sigma_{z}^{\mu}\mathcal E_h^{\Psi_1}+ \Pi_{k+1}^o(\sigma_{z}^{\mu}\mathcal E_h^{\Psi_1}), -\sigma_{z}^{\mu}\mathcal E_h^{\widehat \Psi_1} + \Pi_k^\partial(\sigma_{z}^{\mu}\mathcal E_h^{\widehat \Psi_1}))\\
			&\quad  + \mathscr B(\mathcal E_h^{\bm  \Phi_1}, \mathcal E_h^{\Psi_1}, \mathcal E_h^{\widehat \Psi_1}; \bm \Pi_{k}^{o} (\sigma_{z}^{\mu}\mathcal E_h^{\bm \Phi_1}), {  -\Pi_{k+1}^{o}(\sigma_{z}^{\mu}\mathcal E_h^{\Psi_1})},- \Pi_k^\partial(\sigma_{z}^{\mu}\mathcal E_h^{\widehat \Psi_1}))\\
			& = \mathscr B(\mathcal E_h^{\bm \Phi_1}, \mathcal E_h^{\Psi_1}, \mathcal E_h^{\widehat \Psi_1}; \sigma_{z}^{\mu}\mathcal E_h^{\bm  \Phi_1} - \bm \Pi_k^o (\sigma_{z}^{\mu}\mathcal E_h^{\bm \Phi_1}),  -\sigma_{z}^{\mu}\mathcal E_h^{\Psi_1}+ \Pi_{k+1}^o(\sigma_{z}^{\mu}\mathcal E_h^{\Psi_1}), -\sigma_{z}^{\mu}\mathcal E_h^{\widehat \Psi_1} + \Pi_k^\partial(\sigma_{z}^{\mu}\mathcal E_h^{\widehat \Psi_1}))\\
			&\quad+\mathscr E(\bm{\Phi}_1,\Psi_1; \bm \Pi_{k}^{o} (\sigma_{z}^{\mu}\mathcal E_h^{\bm \Phi_1}), -\Pi_{k+1}^{o}(\sigma_{z}^{\mu}\mathcal E_h^{\Psi_1}),- \Pi_k^\partial(\sigma_{z}^{\mu}\mathcal E_h^{\widehat \Psi_1})).
		\end{align*}
		Next, we use the definition of $\mathscr B$ in \eqref{def_B} to get:
		\begin{align*}
			\hspace{1em}&\hspace{-1em} \mathscr B(\mathcal E_h^{\bm \Phi_1}, \mathcal E_h^{\Psi_1}, \mathcal E_h^{\widehat \Psi_1}; \sigma_{z}^{\mu}\mathcal E_h^{\bm  \Phi_1} - \bm \Pi_k^o (\sigma_{z}^{\mu}\mathcal E_h^{\bm \Phi_1}),  -\sigma_{z}^{\mu}\mathcal E_h^{\Psi_1}+ \Pi_{k+1}^o(\sigma_{z}^{\mu}\mathcal E_h^{\Psi_1}), -\sigma_{z}^{\mu}\mathcal E_h^{\widehat \Psi_1} + \Pi_k^\partial(\sigma_{z}^{\mu}\mathcal E_h^{\widehat \Psi_1}))\\
			&= \langle \mathcal E_h^{ \Psi_1} -  \mathcal E_h^{\widehat \Psi_1}, (\sigma_{z}^{\mu}\mathcal E_h^{\bm  \Phi_1} - \bm \Pi_{k}^{o}(\sigma_{z}^{\mu}\mathcal E_h^{\bm  \Phi_1}))\cdot \bm{n} \rangle_{\partial{{\mathcal{T}_h}}}\\
			&\quad  +\langle \Pi_k^\partial \mathcal E_h^{ \Psi_1}- \mathcal E_h^{\widehat \Psi_1}, { h_K^{-1}}(\Pi_{k+1}^o(\sigma_{z}^{\mu}\mathcal E_h^{\Psi_1}) -\sigma_{z}^{\mu}\mathcal E_h^{\Psi_1})\rangle_{\partial{{\mathcal{T}_h}}}.
		\end{align*}
		This implies
		\begin{align*}
			\hspace{1em}&\hspace{-1em} (\mathcal E_h^{\bm\Phi_1},   \sigma_{z}^{\mu}\mathcal E_h^{\bm\Phi_1})_{{\mathcal{T}_h}} + \|{ h_K^{-\frac 1 2}}\sigma_{z}^{\frac \mu 2}  (\Pi_k^{\partial}\mathcal E_h^{\Psi_1}- \mathcal E_h^{\widehat \Psi_1})\|_{L^2(\partial\mathcal T_h)}^2 \\
			&=  ( \mathcal E_h^{\Psi_1}, \mu\sigma_{z}^{\mu-1} \nabla\sigma_{z}\cdot\mathcal E_h^{\bm  \Phi_1})_{{\mathcal{T}_h}}-\langle { h_K^{-1}} (\Pi_k^{\partial}\mathcal E_h^{\Psi_1}- \mathcal E_h^{\widehat \Psi_1}),  \Pi_k^{\partial} (\sigma_{z}^{\mu}\mathcal E_h^{\Psi_1}) -\sigma_{z}^{\mu} \Pi_k^{\partial}\mathcal E_h^{\Psi_1}\rangle_{\partial{{\mathcal{T}_h}}}\\
			&\quad+\mathscr E(\bm{\Phi}_1,\Psi_1; \bm \Pi_{k}^{o} (\sigma_{z}^{\mu}\mathcal E_h^{\bm \Phi_1}), -\Pi_{k+1}^{o}(\sigma_{z}^{\mu}\mathcal E_h^{\Psi_1}),- \Pi_k^\partial(\sigma_{z}^{\mu}\mathcal E_h^{\widehat \Psi_1}))\\
			&\quad  +\langle \mathcal E_h^{ \Psi_1} -  \mathcal E_h^{\widehat \Psi_1}, (\sigma_{z}^{\mu}\mathcal E_h^{\bm  \Phi_1} - \bm \Pi_{k}^{o}(\sigma_{z}^{\mu}\mathcal E_h^{\bm  \Phi_1}){ )}\cdot \bm{n} \rangle_{\partial{{\mathcal{T}_h}}}\\
			&\quad  +\langle \Pi_k^\partial \mathcal E_h^{ \Psi_1}-  \mathcal E_h^{\widehat \Psi_1}, { h_K^{-1}}(\Pi_{k+1}^o(\sigma_{z}^{\mu}\mathcal E_h^{\Psi_1}) -\sigma_{z}^{\mu}\mathcal E_h^{\Psi_1})\rangle_{\partial{{\mathcal{T}_h}}}\\
			& {=:} I_1  + I_2 + I_3 + I_4+I_5.
		\end{align*}
		For the first term $I_1$, by \eqref{pro_sigma_2}, Young's inequality, \eqref{error_esti_Psi1} and taking $\kappa$ large enough (independent of $h$) we get
		\begin{align*}
			|I_1|\le C\|\sigma_z^{\frac \mu 2}\mathcal E_h^{\bm\Phi_1}\|_{\mathcal T_h}\|\sigma_z^{\frac \mu 2-1}\mathcal E_h^{\Psi_1}\|_{\mathcal T_h}
			\le \frac{1}{8}\left(\|\sigma_z^{\frac \mu 2}\mathcal E_h^{\bm\Phi_1}\|_{\mathcal T_h}^2 +\|{ h_K^{-\frac 1 2}}\sigma_{z}^{\frac{\mu}{2}}(\Pi_k^{\partial}\mathcal E_h^{\Psi_1}-\mathcal E_h^{\widehat{\Psi}_1})\|^2_{\partial\mathcal T_h}+h^{2+\lambda}\right).
		\end{align*}
		For the term $I_2$, let $\overline{\sigma_z^{\mu}} = \Pi_0^o \sigma_z^{\mu}$, then,  
		\begin{align*}
			|I_2| &= -\langle { h_K^{-1}} (\Pi_k^{\partial}\mathcal E_h^{\Psi_1}- \mathcal E_h^{\widehat \Psi_1}),   \sigma_{z}^{\mu}\mathcal E_h^{\Psi_1} -\sigma_{z}^{\mu} \Pi_k^{\partial}\mathcal E_h^{\Psi_1}\rangle_{\partial{{\mathcal{T}_h}}}\\
			& = -\langle { h_K^{-1}} (\Pi_k^{\partial}\mathcal E_h^{\Psi_1}- \mathcal E_h^{\widehat \Psi_1}),   \sigma_{z}^{\mu}(\mathcal E_h^{\Psi_1} - \Pi_k^{\partial}\mathcal E_h^{\Psi_1})\rangle_{\partial{{\mathcal{T}_h}}}\\
			& = -\langle { h_K^{-1}} (\Pi_k^{\partial}\mathcal E_h^{\Psi_1}- \mathcal E_h^{\widehat \Psi_1}),   (\sigma_{z}^{\mu}-\overline{\sigma_z^{\mu}})(\mathcal E_h^{\Psi_1} - \Pi_k^{\partial}\mathcal E_h^{\Psi_1})\rangle_{\partial{{\mathcal{T}_h}}}\\
			& \le C\sum_{K\in\mathcal T_h} { h_K^{-1}}\|\Pi_k^{\partial}\mathcal E_h^{\Psi_1}- \mathcal E_h^{\widehat \Psi_1}\|_{L^2(\partial K)} \|\mathcal E_h^{\Psi_1}\|_{L^2(\partial K)} \|\sigma_{z}^{\mu} -\overline{\sigma_z^{\mu}} \|_{L^{\infty}(\partial K)}.
		\end{align*}
		{	Here we used  the boundness of $\Pi_k^\partial$ on $L^2(\partial K)$}. By \eqref{Pro_jec_2},  \eqref{pro_sigma_1}, \eqref{error_esti_Psi1} and letting $\kappa$ be large enough,  we have
		\begin{align*}
			|I_2|&\le C\sum_{K\in\mathcal T_h} \|\Pi_k^{\partial}\mathcal E_h^{\Psi_1}- \mathcal E_h^{\widehat \Psi_1}\|_{L^2(\partial K)} \|\mathcal E_h^{\Psi_1}\|_{L^2(\partial K)} \|\sigma_{z}^{\mu-1}\|_{L^{\infty}(K)}\\
			&\le C\sum_{K\in\mathcal T_h} h^{-\frac 1 2}\|\sigma_{z}^{\frac{\mu}{2}}(\Pi_k^{\partial}\mathcal E_h^{\Psi_1}- \mathcal E_h^{\widehat \Psi_1})\|_{L^2(\partial K)} \|\sigma_{z}^{\frac{\mu}{2}-1}\mathcal E_h^{\Psi_1}\|_{L^2(K)}\\
			&\le \frac{1}{8}\left(\|\sigma_z^{\frac \mu 2}\mathcal E_h^{\bm\Phi_1}\|_{\mathcal T_h}^2 +h^{-1}\|\sigma_{z}^{\frac{\mu}{2}}(\Pi_k^{\partial}\mathcal E_h^{\Psi_1}-\mathcal E_h^{\widehat{\Psi}_1})\|^2_{\partial\mathcal T_h}+h^{2+\lambda}\right).
		\end{align*}
		Using the same technique that we used to estimate $I_2$,  for the term $I_3$ we have
		\begin{align*}
			|I_3|\le \frac{1}{8}\left(\|\sigma_z^{\frac \mu 2}\mathcal E_h^{\bm\Phi_1}\|_{\mathcal T_h}^2 +h^{-1}\|\sigma_{z}^{\frac{\mu}{2}}(\Pi_k^{\partial}\mathcal E_h^{\Psi_1}-\mathcal E_h^{\widehat{\Psi}_1})\|^2_{\partial\mathcal T_h}+h^{2+\lambda}\right).
		\end{align*}
		For the term $I_4$, we use \eqref{Stbilization_weighted} and \eqref{weight-app}  to get
		\begin{align*}
			|I_4|&\le  \|\sigma_{z}^{\frac{\mu}{2}}(\mathcal E_h^{\Psi_1}-\mathcal E_h^{\widehat\Psi_1})\|_{\partial \mathcal T_h}
			\|\sigma_{z}^{-\frac{\mu}{2}}(\sigma_{z}^{\mu}\mathcal E_h^{\bm  \Phi_1} - \bm \Pi_{k}^{o}(\sigma_{z}^{\mu}\mathcal E_h^{\bm \Phi_1}))\|_{\partial \mathcal T_h}\\
			&\le Ch\left(\|\sigma_z^{\frac{\mu}{2}}\mathcal E_h^{\bm{\Phi}_1}\|_{\mathcal T_h}	+ h^{-\frac 1 2}\|\sigma_z^{\frac{\mu}{2}}(\Pi_k^{\partial}\mathcal E_h^{\Psi_1}-\mathcal E_h^{\widehat{\Psi}_1})\|_{\partial\mathcal T_h}\right)\|\sigma_{z}^{\frac{\mu}{2}-1}\mathcal E_h^{\bm{\Phi}_1}\|_{\mathcal T_h}.
		\end{align*}
		{
			
			By the definition of $\sigma_z$ in \eqref{def_sigma} we have $\sigma_z\ge \kappa h$,  and choosing $\kappa$ big enough we obtain
		}
		\begin{align*}
			|I_4|
			&\le  \frac{1}{8}\left(\|\sigma_{z}^{\frac{\mu}{2}}\mathcal E_h^{\bm{\Phi}_1}\|_{\mathcal T_h}^2+
			h^{-1}\|\sigma_{z}^{\frac{\mu}{2}}(\Pi_k^{\partial}\mathcal E_h^{\Psi_1}-\mathcal E_h^{\widehat \Psi_1})\|_{\partial\mathcal T_h}^2\right).
		\end{align*}
		For the term $I_5$, { we use \eqref{weight-app}  to get}
		\begin{align*}
			|I_5|&\le  h^{-1}\|\sigma_{z}^{\frac{\mu}{2}}(\Pi_k^\partial\mathcal E_h^{\Psi_1}-\mathcal E_h^{\widehat\Psi_1})\|_{\partial \mathcal T_h}
			\|\sigma_{z}^{-\frac{\mu}{2}}(\Pi_{k+1}^o(\sigma_{z}^{\mu}\mathcal E_h^{\Psi_1}) -\sigma_{z}^{\mu}\mathcal E_h^{\Psi_1})\|_{\partial \mathcal T_h}\\
			&\le Ch^{-\frac 1 2}\|\sigma_{z}^{\frac{\mu}{2}}(\Pi_k^\partial\mathcal E_h^{\Psi_1}-\mathcal E_h^{\widehat\Psi_1})\|_{\partial \mathcal T_h}\|\sigma_{z}^{\frac{\mu}{2}-1}\mathcal E_h^{{\Psi}_1}\|_{\mathcal T_h}.
		\end{align*}
		Next, we apply \eqref{error_esti_Psi1} and take $\kappa$ sufficiently  large to have
		\begin{align*}
			|I_5| \le  \frac{1}{8}\left(\|\sigma_{z}^{\frac{\mu}{2}}\mathcal E_h^{\bm{\Phi}_1}\|_{\mathcal T_h}^2+
			h^{-1}\|\sigma_{z}^{\frac{\mu}{2}}(\Pi_k^{\partial}\mathcal E_h^{\Psi_1}-\mathcal E_h^{\widehat \Psi_1})\|_{\partial\mathcal T_h}^2+h^{2+\lambda}\right).
		\end{align*}
		Summing the estimates for $\{|I_k|\}_{k=1}^5$ gives 
		\begin{align*}
			\|\sigma_{z}^{\frac \mu 2}\mathcal E_h^{\bm\Phi_1}\|_{\mathcal T_h}
			+
			\|h_K^{-\frac 1 2}\sigma_{z}^{\frac \mu 2} (\Pi_k^{\partial}\mathcal E_h^{\Psi_1}- \mathcal E_h^{\widehat \Psi_1})\|_{L^2(\partial\mathcal T_h)} \le  Ch^{1+\frac \lambda 2}.
		\end{align*}
		We get the desired result  \eqref{weight_err_1} by using the above estimate and \eqref{Stbilization_weighted}. The proof of \eqref{weight_err_2} is similar to the proof of \eqref{weight_err_1}.
	\end{proof}

	\paragraph*{Step 3: Proof of  \eqref{maxmimu_norm_u}-\eqref{maxmimu_norm_q}  in \Cref{main_result_Linfty_norm}}	

	\begin{proof}
		We choose $\delta_{1,z}$ so that $\|u_h\|_{ L^\infty(\Omega)} = (\delta_{1,z}, u_h)_{\mathcal T_h}$, then  using \Cref{eq_B}, 
		\begin{align*}
			-(\delta_{1,z}, u_h)_{\mathcal T_h} & = \mathscr B(\bm \Phi_{1,h}, \Psi_{1,h}, \widehat{\Psi}_{1,h}; \bm q_h, u_h,\widehat u_h)&\textup{by }\eqref{Green_HDG_1}\\
			& =  \mathscr B( \bm q_h, u_h,\widehat u_h; \bm \Phi_{1,h}, \Psi_{1,h}, \widehat{\Psi}_{1,h})&\textup{by }\eqref{commute}\\
			& =  \mathscr B( \bm q, u, u; \bm \Phi_{1,h}, \Psi_{1,h}, \widehat{\Psi}_{1,h})
			&\textup{by }\eqref{HDG_exact}\\
			& = \mathscr B(\bm \Phi_{1,h}, \Psi_{1,h}, \widehat{\Psi}_{1,h}; \bm q, u, u)&\textup{by }\eqref{commute}\\
			&  =\mathscr B(\bm \Phi_{1,h} - \bm \Phi_{1}, \Psi_{1,h}-\Psi_{1}, \widehat{\Psi}_{1,h} - {\Psi}_{1}; \bm q, u, u) \\
			&\qquad + \mathscr B(\bm \Phi_{1}, \Psi_{1}, {\Psi}_{1}; \bm q, u, u)\\
			&  =\mathscr B(\bm \Phi_{1,h} - \bm \Phi_{1}, \Psi_{1,h}-\Psi_{1}, \widehat{\Psi}_{1,h} - {\Psi}_{1}; \bm q, u, u) - (\delta_{1,z}, u)_{\mathcal T_h}
			&\textup{by }\eqref{Green_HDG_1}.
		\end{align*}
		By the definition of $ \mathscr B$ in  \eqref{def_B} we have 
		\begin{align*}
			\hspace{1em}&\hspace{-1em} \mathscr B(\bm \Phi_{1,h} - \bm \Phi_{1}, \Psi_{1,h}-\Psi_{1}, \widehat{\Psi}_{1,h} - {\Psi}_{1}; \bm q, u, u)\\
			& = (\bm \Phi_{1,h} - \bm \Phi_{1}, \bm q)_{\mathcal T_h}  - ( \Psi_{1,h}-\Psi_{1},\nabla\cdot \bm q)_{\mathcal T_h}  + \langle\widehat{\Psi}_{1,h} - {\Psi}_{1},\bm q\cdot\bm n \rangle_{\partial\mathcal T_h}\\
			&\quad  -(\nabla\cdot (\bm \Phi_{1,h} - \bm \Phi_{1}), u)_{\mathcal T_h} +  \langle(\bm \Phi_{1,h} - \bm \Phi_{1})\cdot \bm n, u\rangle_{\partial\mathcal T_h}\\
			& = ( \nabla(\Psi_{1,h}-\Psi_{1}),\bm q)_{\mathcal T_h}
			- \langle \Psi_{1,h}-\widehat\Psi_{1,h},\bm q\cdot\bm n\rangle_{\partial\mathcal T_h},
		\end{align*}
		where we used integration by parts  and the fact that $ \langle\widehat{\Psi}_{1,h},\bm q\cdot\bm n \rangle_{\partial\mathcal T_h} = 0$ and  $ \langle {\Psi}_{1},\bm q\cdot\bm n \rangle_{\partial\mathcal T_h} = 0$ in the last equality. Therefore,
		\begin{align*}
			(\delta_{1,z}, u_h)_{\mathcal T_h}  &= (\delta_{1,z}, u)_{\mathcal T_h}-( \nabla(\Psi_{1,h}-\Psi_{1}),\bm q)_{\mathcal T_h}
			+ \langle \Psi_{1,h}-\widehat\Psi_{1,h},\bm q\cdot\bm n\rangle_{\partial\mathcal T_h} \\
			&=  (\delta_{1,z}, u)_{\mathcal T_h} + T_1 + T_2.
		\end{align*}
		Next, we estimate the above two terms $T_1$ and $T_2$. For the term $T_1$ we have 
		\begin{align*}
			|T_1|&\le\|\sigma_z^{\frac{\mu}{2}}\nabla(\Psi_{1,h}-\Psi_1)\|_{L^2(\Omega)}\|\sigma_z^{-\frac \mu 2}\|_{L^2(\Omega)} \|\bm q\|_{L^{\infty}(\Omega)} \\
			&\le (\|\sigma_z^{\frac{\mu}{2}}\nabla(\Psi_{1,h}-\Pi_{k+1}^o\Psi_1)\|_{L^2(\Omega)} + \|\sigma_z^{\frac{\mu}{2}}\nabla(\Pi_{k+1}^o\Psi_1-\Psi_1)\|_{L^2(\Omega)})\|\sigma_z^{-\frac \mu 2}\|_{L^2(\Omega)} \|\bm q\|_{L^{\infty}(\Omega)}\\
			& \le Ch\|\bm q\|_{L^{\infty}(\Omega)}.
		\end{align*}
		Here we used \eqref{Stbilization_weighted}, \eqref{weight_err_1}, \eqref{L2_prijection_sigma} and\eqref{sigma_lemma_2_constant}. For the term $T_2$ we have 
		\begin{align*}
			|T_2|&=| \langle\mathcal E_h^{\Psi_1}-\mathcal E_h^{\widehat\Psi_1}+\Pi_{k+1}^{o}\Psi_1-\Pi_k^{\partial}\Psi_1,\bm q\cdot\bm n\rangle_{\partial\mathcal T_h}|=| \langle\mathcal E_h^{\Psi_1}-\mathcal E_h^{\widehat\Psi_1}+\Pi_{k+1}^{o}\Psi_1-\Psi_1,\bm q\cdot\bm n\rangle_{\partial\mathcal T_h}|\\
			&\le\|\sigma_z^{\frac{\mu}{2}}(\mathcal E_h^{\Psi_1}-\mathcal E_h^{\widehat\Psi_1}+\Pi_{k+1}^{o}\Psi_1-\Psi_1)\|_{L^2(\partial\mathcal T_h)}\|\sigma_z^{-\frac \mu 2}\|_{L^2(\partial\mathcal T_h)}
			\|\bm q\|_{L^{\infty}(\Omega)}\\
			&\le Ch\|\bm q\|_{L^{\infty}(\Omega)},
		\end{align*}
		where we used \eqref{weight_err_1}, \eqref{L2_prijection_sigma} and \eqref{sigma-bd}. Combining the above two estimates,  and using the fact that {$\|\delta_{1,z}\|_{L^1(\Omega)}\le C$ (see \eqref{Bound_L1})  give
			\begin{align*}
				\|u_h\|_{L^\infty(\Omega)} 
				&\le
				C(\|u\|_{L^\infty(\Omega)}+h\|\bm q\|_{\bm L^{\infty}(\Omega)}).
		\end{align*}}
		This completes the proof of \eqref{maxmimu_norm_u}. The proof of \eqref{maxmimu_norm_q} is similar to the proof of \eqref{maxmimu_norm_u}.
	\end{proof}

	\paragraph*{Step 4: Proof of \eqref{thmL_infty_q}-\eqref{thmL_infty_u}  in \Cref{main_result_Linfty_norm}}		
	In this step, we only prove \eqref{thmL_infty_u} since the proof of \eqref{thmL_infty_q} is very similar. We take $(\bm v_h, w_h, \widehat w_h) = (\bm\Pi_{k}^{o} \bm q - \bm q_h, \Pi_{k+1}^{o} u - u_h, \Pi_k^\partial  u-\widehat u_h)$ in \eqref{Green_HDG_1} to get
	\begin{align*}
		\hspace{1em}&\hspace{-1em}	- (\delta_{1,z}, \Pi_{k+1}^{o} u - u_h)_{\mathcal T_h} \\
		& =  \mathscr B(\bm \Phi_{1,h},\Psi_{1,h},\widehat \Psi_{1,h}; \bm\Pi_{k}^{o} \bm q - \bm q_h, \Pi_{k+1}^{o} u - u_h, \Pi_k^\partial  u-\widehat u_h)\\
		& =  \mathscr B( \bm\Pi_{k}^{o} \bm q - \bm q_h, \Pi_{k+1}^{o} u - u_h, \Pi_k^\partial  u-\widehat u_h; \bm \Phi_{1,h},\Psi_{1,h},\widehat \Psi_{1,h}) & \textup{by } \eqref{commute}\\
		& =  \mathscr B( \bm\Pi_{k}^{o} \bm q - \bm q, \Pi_{k+1}^{o} u - u, \Pi_k^\partial  u-u; \bm \Phi_{1,h},\Psi_{1,h},\widehat \Psi_{1,h})& \textup{by } \eqref{HDG_exact}\\
		& =  \mathscr B(\bm \Phi_{1,h},\Psi_{1,h},\widehat \Psi_{1,h}; \bm\Pi_{k}^{o} \bm q - \bm q, \Pi_{k+1}^{o} u - u, \Pi_k^\partial  u- u ) & \textup{by } \eqref{commute}\\
		& =- (\Psi_{1,h}, \nabla\cdot (\bm\Pi_{k}^{o} \bm q - \bm q))_{{\mathcal{T}_h}}+\langle \widehat \Psi_{1,h}, (\bm\Pi_{k}^{o} \bm q - \bm q)\cdot \bm{n} \rangle_{\partial{{\mathcal{T}_h}}}\\
		&\quad-\langle h_K^{-1}(\Pi_k^{\partial}\Psi_{1,h}-\widehat\Psi_{1,h}), \Pi_{k+1}^{o}u-u \rangle_{\partial\mathcal T_h},
	\end{align*}
	where we used  the definition of $ \mathscr B$ in the last step. Next, integration by part gives	
	\begin{align*}
		(\delta_{1,z}, \Pi_{k+1}^{o} u - u_h)_{\mathcal T_h} &= \langle\Psi_{1,h}- \widehat \Psi_{1,h}, (\bm\Pi_{k}^{o} \bm q - \bm q)\cdot \bm{n} \rangle_{\partial{{\mathcal{T}_h}}}+\langle {h_K^{-1}}(\Pi_k^{\partial}\Psi_{1,h}-\widehat\Psi_{1,h}), \Pi_{k+1}^{o}u-u \rangle_{\partial\mathcal T_h}\\
		& = T_1 + T_2.
	\end{align*}
	We now estimate the above two terms. For the term $T_1$, by the triangle  inequality we have 
	\begin{align*}
		|T_1|&= |\langle\Psi_{1,h}-\Pi_{k+1}^o\Psi_1 +\Pi_{k+1}^o\Psi_1 - \Pi_k^\partial \Psi_1 + \Pi_k^\partial \Psi_1-\widehat \Psi_{1,h}, (\bm\Pi_{k}^{o} \bm q - \bm q)\cdot \bm{n} \rangle_{\partial{{\mathcal{T}_h}}}|\\
		&\le  |\langle\mathcal E_h^{\Psi_1}- \mathcal E_h^{\widehat \Psi_1}, (\bm\Pi_{k}^{o} \bm q - \bm q)\cdot \bm{n} \rangle_{\partial{{\mathcal{T}_h}}}| +  |\langle\Pi_{k+1}^o\Psi_1 - \Pi_k^\partial \Psi_1 , (\bm\Pi_{k}^{o} \bm q - \bm q)\cdot \bm{n} \rangle_{\partial{{\mathcal{T}_h}}}| \\
		&=  |\langle\mathcal E_h^{\Psi_1}- \mathcal E_h^{\widehat \Psi_1}, (\bm\Pi_{k}^{o} \bm q - \bm q)\cdot \bm{n} \rangle_{\partial{{\mathcal{T}_h}}}| +  |\langle\Pi_{k+1}^o\Psi_1 -  \Psi_1, (\bm\Pi_{k}^{o} \bm q - \bm q)\cdot \bm{n} \rangle_{\partial{{\mathcal{T}_h}}}|.
	\end{align*}
	Here we used the fact that $ \langle \Pi_k^\partial\Psi_1,\bm q\cdot\bm n \rangle_{\partial\mathcal T_h} = 0$ and  $ \langle {\Psi}_{1},\bm q\cdot\bm n \rangle_{\partial\mathcal T_h} = 0$ in  the last equality. Next, by the Cauchy-Schwarz  and  H\"older inequality we have 
	\begin{align*}
		|T_1|&\le \|\sigma_z^{-\frac{\mu}{2}}\|_{L^2(\partial\mathcal T_h)}(\|\sigma_z^{\frac{\mu}{2}}(\mathcal E_h^{\Psi_1}- \mathcal E_h^{\widehat \Psi_1})\|_{L^2(\partial\mathcal T_h)}+\|\sigma_z^{\frac{\mu}{2}}(\Pi_{k+1}^o\Psi_1 -  \Psi_1)\|_{L^2(\partial\mathcal T_h)})\|\bm {\Pi}_k^o\bm q-\bm q\|_{\bm L^\infty(\Omega)}          \\
		&\le C\theta^{-\frac{1}{2}}\|\sigma_z^{-\frac{\mu}{2}}\|_{L^2(\Omega)}(\|\sigma_z^{\frac{\mu}{2}}(\mathcal E_h^{\Psi_1}- \mathcal E_h^{\widehat \Psi_1})\|_{\partial\mathcal T_h}+\|\sigma_z^{\frac{\mu}{2}}(\Pi_{k+1}^o\Psi_1 -  \Psi_1)\|_{\partial\mathcal T_h}\|\bm {\Pi}_k^o\bm q-\bm q\|_{\bm L^\infty(\Omega)},
	\end{align*}%\mathcal E_h^{\Psi_1}- \mathcal E_h^{\widehat \Psi_1}
	where we used \eqref{sigma-bd} in the last inequality. Next, we use \eqref{sigma_lemma_2_constant}, \eqref{L2_prijection_sigma}, \eqref{regularity_psi_1}, \eqref{weight_err_1} and \eqref{Pro_jec_1} to get
	\begin{align*}
		|T_1|\le C\theta^{-\frac{1}{2}} h^{-\frac {\lambda}{2}} h^{\frac 3 2+\frac{\lambda}{2}} h^{k+1}|\bm q|_{\bm W^{k+1,\infty}(\Omega)}\le C h^{k+2}|\bm q|_{\bm W^{k+1,\infty}(\Omega)}.
	\end{align*}
	For the term $T_2$, we use the same arguments of $T_1$  to get
	\begin{align*}
		|T_2|\le C h^{k+2}|u|_{W^{k+2,\infty}(\Omega)}.
	\end{align*}
	This implies that
	\begin{align*}
		\|\Pi_{k+1}^{o} u - u_h\|_{L^\infty(\Omega)}\le C h^{k+2}(|u|_{W^{k+2,\infty}(\Omega)}+|\bm q|_{\bm W^{k+1,\infty}(\Omega)}).
	\end{align*}
	Using the above inequality and \eqref{Pro_jec_1} we have our desired result.

	\section{Numerical Experiments}
	\label{NumericalExperiments}
	
	In this section, we present two examples to illustrate our theoretical results.
	
	\begin{example}\label{example_1}
		We  first  test the convergence rate of the $L^\infty$ norm estimate in 2D, in order to provide an example on a non simplicial mesh. The domain $\Omega = (0,1)\times (0,1)$ and we partition by ladder-shaped meshes; see \Cref{fig2}. 
		The exact solution $u(x,y)$ is chosen to be $\sin^2(\pi x)\sin^2(\pi y)$. The source term $f$ is  chosen to match the exact solution of \eqref{Poisson} and the approximation errors  are listed in \Cref{table_1}. The rates  match the theoretical predictions in \Cref{main_result_Linfty_norm}.
		\begin{figure}[H]
			\centering
			\includegraphics[scale=0.3]{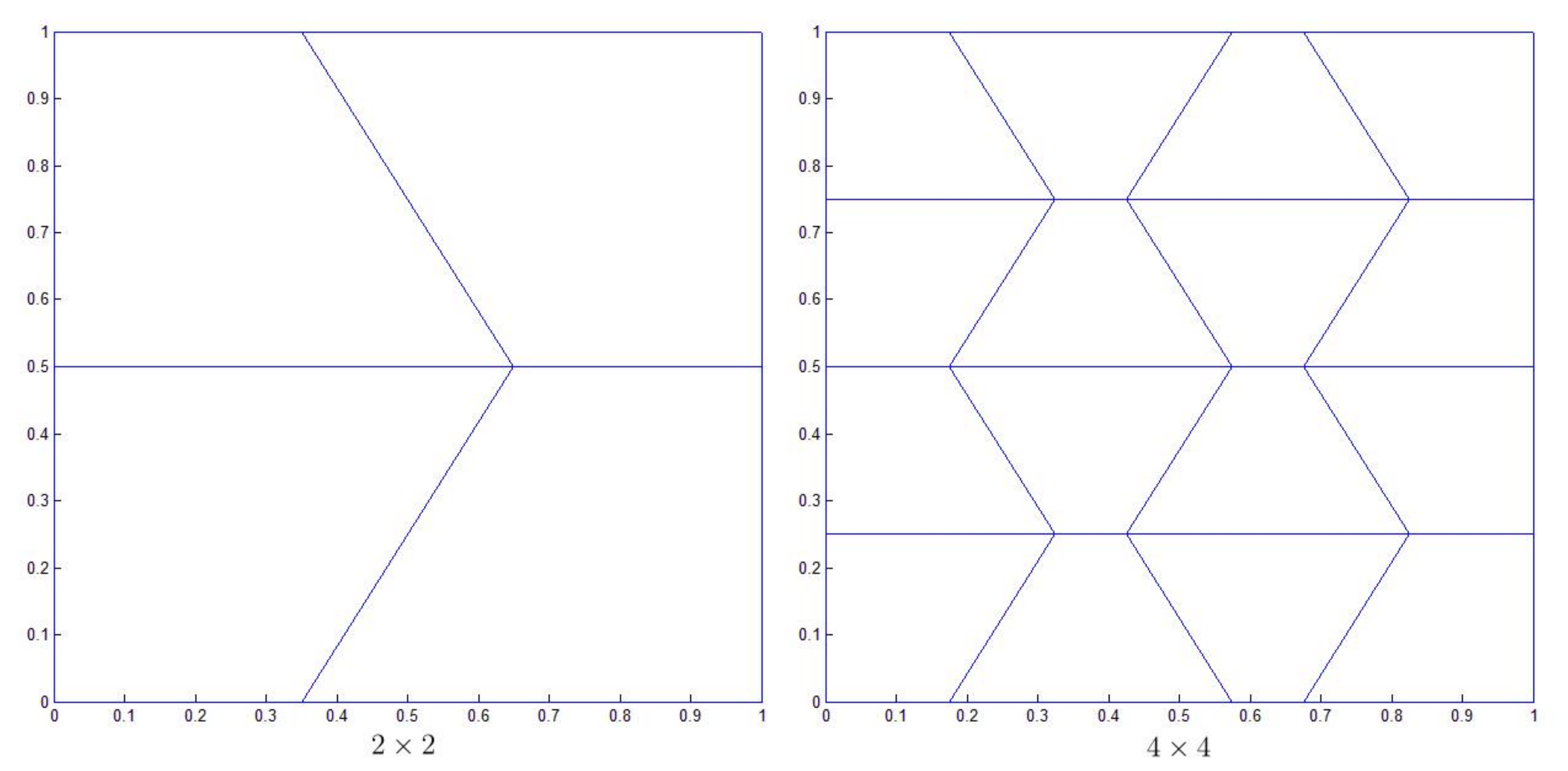}
			\caption{\label{fig2} Ladder-shaped meshes for $\Omega = (0,1)\times (0,1)$.}
		\end{figure}
		\begin{table}%[H]
			{
				\begin{tabular}{c|c|c|c|c|c|c|c}
					\Xhline{1pt}

					\multirow{2}{*}{}
					&\multirow{2}{*}{$h$}	
					&\multicolumn{2}{c|}{$k=0$}	
					&\multicolumn{2}{c|}{$k=1$}	
					&\multicolumn{2}{c}{$k=2$}	\\
					\cline{3-8}
					& &Error &Rate
					&Error &Rate
					&Error &Rate
					\\
					\cline{1-8}
					\multirow{5}{*}{ $\|\bm{q}-\bm{q}_h\|_{\bm L^\infty(\Omega)}$}
					&	$2^{-4}$	&	 1.95E-01	&	-	    &	   2.84E-02	&	-	    &	  1.23E-03	&	-	 \\ 
					&	$2^{-5}$	&	 9.38E-02	&	1.06	&	   7.72E-02	&	1.88	&	  1.30E-04	&	3.24	 \\ 
					&	$2^{-6}$	&	 4.68E-02	&	1.00	&	   1.98E-02	&	1.96	&	  2.12E-05	&	2.62	 \\ 
					&	$2^{-7}$	&	 2.33E-02	&	1.01	&	   4.95E-02	&	2.00	&	  2.65E-06	&	3.00	 \\ 
					&	$2^{-8}$	&	 1.16E-02	&	1.01	&	   1.23E-02	&	2.00	&	  3.33E-07	&	2.99	 \\

					\cline{1-8}
					\multirow{5}{*}{$\|u-u_h\|_{L^\infty(\Omega)}$}
					&	$2^{-4}$	&	 4.90E-02	&	-	    &	   4.96E-04	&	-	    &	  1.48E-04	&	-	 \\ 
					&	$2^{-5}$	&	 1.41E-02	&	1.80	&	   5.79E-05	&	3.10	&	  1.22E-05	&	3.60	 \\ 
					&	$2^{-6}$	&	 3.54E-03	&	1.99	&	   6.96E-06	&	3.06	&	  7.53E-07	&	4.02	 \\ 
					&	$2^{-7}$	&	 8.88E-04	&	2.00	&	   8.68E-07	&	3.00	&	  4.80E-08	&	3.97	 \\ 
					&	$2^{-8}$	&	 2.22E-04	&	2.00	&	   1.08E-07	&	3.01	&	  3.13E-09	&	3.94	 \\

					\Xhline{1pt}

				\end{tabular}
			}
			\caption{\Cref{example_1}: $L^\infty(\Omega)$ errors for $\bm{q}_h$ and $u_h$ on domain $(0,1)\times (0,1)$ with  Ladder-shaped meshes.}\label{table_1}
		\end{table}

	\end{example}

	\begin{example}\label{example_2}
		Next, we test the convergence rate of the $L^\infty$ norm estimate in 3D. The domain $\Omega = (0,1)\times (0,1)\times (0,1)$ and we use uniform  simplex meshes. 
		The exact solution $u(x,y,z)$ is chosen to be $u(x,y,z) = x(x-1)y(y-1)z(z-1)$. The source term $f$ is  chosen to match the exact solution of \eqref{Poisson} and the approximation errors  are listed in \Cref{tab2}. The rates  match the theoretical predictions in \Cref{main_result_Linfty_norm}.

		\begin{table}[H]
			{
				\begin{tabular}{c|c|c|c|c|c|c|c}
					\Xhline{1pt}

					\multirow{2}{*}{}
					&\multirow{2}{*}{$h$}	
					&\multicolumn{2}{c|}{$k=0$}	
					&\multicolumn{2}{c|}{$k=1$}	
					&\multicolumn{2}{c}{$k=2$}	\\
					\cline{3-8}
					& &Error &Rate
					&Error &Rate
					&Error &Rate
					\\
					\cline{1-8}
					\multirow{5}{*}{ $\|\bm{q}-\bm{q}_h\|_{\bm L^\infty(\Omega)}$}
					&	$2^{-1}$	&	 9.79E-03	&	-	    &	   6.47E-03	&	-	    &	  7.58E-03	&	-	 \\ 
					&	$2^{-2}$	&	 8.61E-03	&	0.18	&	   1.84E-03	&	1.81	&	  1.49E-04	&	2.35	 \\ 
					&	$2^{-3}$	&	 5.20E-03	&	0.73	&	   5.01E-04	&	1.88	&	  2.78E-05	&	2.42	 \\ 
					&	$2^{-4}$	&	 2.77E-03	&	0.91	&	   1.30E-04	&	1.94	&	  4.18E-06	&	2.73	 \\ 
					&	$2^{-5}$	&	 1.42E-03	&	0.96	&	   3.36E-05	&	1.95	&	  5.69E-07	&	2.88	 \\

					\cline{1-8}
					\multirow{5}{*}{$\|u-u_h\|_{L^\infty(\Omega)}$}
					&	$2^{-1}$	&	 4.51E-03	&	-	    &	   9.86E-04	&	-	    &	  2.64E-04	&	-	 \\ 
					&	$2^{-2}$	&	 1.31E-03	&	1.78	&	   1.31E-04	&	2.90	&	  1.65E-05	&	4.00	 \\ 
					&	$2^{-3}$	&	 3.38E-04	&	1.96	&	   1.73E-05	&	2.93	&	  1.03E-07	&	4.00	 \\ 
					&	$2^{-4}$	&	 8.51E-05	&	1.99	&	   2.23E-06	&	2.96	&	  6.44E-08	&	4.00	 \\ 
					&	$2^{-5}$	&	 2.13E-05	&	2.00	&	   2.92E-07	&	2.93	&	  4.03E-09	&	4.00	 \\

					\Xhline{1pt}

				\end{tabular}
			}
			\caption{\Cref{example_2}: $L^\infty(\Omega)$ errors for $\bm{q}_h$ and $u_h$ on domain $(0,1)\times (0,1)\times (0,1)$ with  simplex meshes.} \label{tab2}
		\end{table}

	\end{example}

	\section{Conclusion}
	We have proved  sharp $L^\infty$ norm estimates for the Poisson equation in both 2D and 3D. In the future,  we would like to extend the results to some other models, such as the Stokes equation and Maxwell's equations.

\bibliographystyle{siamplain}
\bibliography{Maxwell,Linfity,Dirichlet_Boundary_Control,Mypapers,Added,HDG}

\end{document}